\newtheorem{theorem}{Theorem}[section]
\newtheorem{lemma}[theorem]{Lemma}
\newtheorem{claim}[theorem]{Claim}
\newtheorem{example}[theorem]{Example}
\newtheorem{conjecture}[theorem]{Conjecture}
\newtheorem{proposition}[theorem]{Proposition}
\newtheorem{observation}[theorem]{Observation}
\newtheorem{problem}[theorem]{Problem}
\newtheorem{definition}[theorem]{Definition}
\newenvironment{proofclaim}[1][Proof of Claim]{\begin{proof}[#1]}{\end{proof}}
\numberwithin{equation}{section}
\newcommand{\bbZ}{\mathbb{Z}^+}
\newcommand{\ep}{\varepsilon} 
\newcommand{\eps}{\ep} 
\newcommand{\tbf}[1]{\textbf{#1}}
\newcommand{\floor}[1]{\left\lfloor#1\right\rfloor}
\newcommand{\ceiling}[1]{\left\lceil#1\right\rceil}
\newcommand{\cA}{{\mathcal A}}
\def\al#1{}
	\renewcommand{\al}[1]{\footnote{\textbf{AL: }#1}}         % COMMENT OUT for clean output
\def\ld#1{}
	\renewcommand{\ld}[1]{\footnote{\textbf{LD: }#1}}         % COMMENT OUT for clean output
\def\refcom#1{}
	\renewcommand{\refcom}[1]{\footnote{\textbf{NB: }#1}}
\title{Spanning trees with few branch vertices} 
\author{Louis DeBiasio\thanks{Department of Mathematics, Miami University; {\tt  debiasld@miamioh.edu}. Research supported in part by Simons Foundation Collaboration Grant \# 283194.} \and Allan Lo\thanks{School of Mathematics, University of Birmingham; {\tt s.a.lo@bham.ac.uk}. The research leading to these results was partially supported by the EPSRC, grant no. EP/P002420/1.}}
\begin{document}
\maketitle

\begin{abstract}
A branch vertex in a tree is a vertex of degree at least three.
We prove that, for all $s\geq 1$, every connected graph on $n$ vertices with minimum degree at least $(\frac{1}{s+3}+o(1))n$ contains a spanning tree having at most $s$ branch vertices.  Asymptotically, this is best possible and solves, in less general form, a problem of Flandrin, Kaiser, Ku\u{z}el, Li and Ryj\'a\u{c}ek, which was originally motivated by an optimization problem in the design of optical networks.
\end{abstract}

\section{Introduction}

For a graph~$G$, the \emph{minimum degree of $G$}, denoted by $\delta(G)$, is the smallest degree of its vertices.
A \emph{tree} is an acyclic connected graph and a \emph{branch vertex} in a tree is a vertex of degree at least three.  Dirac~\cite{Dir} proved that every graph with minimum degree at least $(n-1)/2$ contains a Hamiltonian path, i.e.\ a spanning tree with no branch vertices and exactly two leaves; furthermore, this is best possible as for all $n\geq 2$, there are connected graphs with minimum degree $\ceiling{(n-1)/2}-1$ which have no Hamiltonian paths.  This result has been generalized in many ways.  In particular, Win \cite{Win} proved that if $G$ is a connected graph on $n$ vertices with $\delta(G)\geq (n-1)/k$, then $G$ contains a spanning tree in which every vertex has degree at most $k$.  Broersma and Tuinstra \cite{BT} proved that if $G$ is a connected graph on $n$ vertices with $\delta(G)\geq (n-k+1)/2$, then $G$ contains a spanning tree with at most $k$ leaves.  These results are best possible for all $k\geq 2$ and when $k=2$, they correspond to Dirac's theorem.  

The problem of determining whether a connected graph contains a spanning tree with a bounded number of branch vertices, while a natural theoretical question, seems to have been first explicitly studied because of a problem related to wavelength-division multiplexing (WDM) technology in optical networks, where one wants to minimize the number of \emph{light-splitting switches} in a \emph{light-tree} (see~\cite{GHHSV} for a more detailed description and background). 
Gargano, Hell, Stacho and Vaccaro~\cite{GHSV} showed that the problem of finding a spanning tree with the minimum number of branch vertices is NP-hard.
Since then, the problem has been investigated by many authors~\cite{CCGG,CCR,CGI,CS,LMS,Mar,MSU,RSS,SSRMGF,SLC,SSR}.

A spanning tree with at most one branch vertex is called a \emph{spider}.  Gargano, Hammar, Hell, Stacho and Vaccaro~\cite{GHHSV} (also see Gargano and Hammar~\cite{GH}) proved that if $G$ is a connected graph on $n$ vertices with $\delta(G)\geq (n-1)/3$, then $G$ contains a spanning spider (later Chen, Ferrara, Hu, Jacobson and Liu~\cite{CFHJL} proved the stronger result that connected graphs on $n\geq 56$ vertices with $\delta(G)\geq (n-2)/3$ contain a spanning \emph{broom}; that is, a spanning spider obtained by joining the center of a star to an endpoint of a path). Motivated by this, Gargano et al.~\cite{GHHSV} conjectured that for all $s\geq 1$, if $G$ is a connected graph on $n$ vertices with $\delta(G)\geq (n-1)/(s+2)$, then $G$ contains a spanning tree with at most $s$ branch vertices.  Later, Flandrin, Kaiser, Ku\v{z}el, Li and Ryj\'a\v{c}ek~\cite[Problem 11]{FKKLR} asked if the much stronger bound of $\delta(G)\geq n/(s+3)+C$ is sufficient and then Ozeki and Yamashita~\cite[Conjecture 30]{OY} conjectured a precise value for the constant term\footnote{In both places, the conjecture is stated as a generalized Ore-type degree condition; that is, in terms of the sum of the degrees of every independent set of $s+3$ vertices, but we only state the minimum degree version here.}.  

\begin{conjecture}[Ozeki and Yamashita~\cite{OY}]\label{mainconj}
For all $s\in \bbZ$, if $G$ is a connected graph on~$n$ vertices with $\delta(G)\geq \frac{n-s}{s+3}$, then $G$ contains a spanning tree with at most $s$ branch vertices.
\end{conjecture}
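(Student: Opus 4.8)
Let $b(G)$ be the minimum number of branch vertices over all spanning trees of $G$; the goal is to show $b(G)\le s$. We may assume $n$ is large: for $n\le s+2$ every spanning tree has at most $n-2\le s$ branch vertices, and the remaining bounded range (for fixed $s$) would be treated separately. Suppose for contradiction that $b(G)\ge s+1$, and pick a spanning tree $T$ realising $b(G)$, then having the fewest leaves, then (to break ties) with lexicographically smallest sorted list of branch-vertex degrees. From $\sum_{v}\bigl(2-\deg_T(v)\bigr)=2$ one gets that a spanning tree with $b$ branch vertices has at least $b+2$ leaves, so $T$ has at least $s+3$ leaves.

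The engine is a \emph{dispersed-leaves dichotomy}. Fix leaves $\ell_1,\dots,\ell_{s+3}$ of $T$; then $\sum_{i}\deg_G(\ell_i)\ge(s+3)\cdot\frac{n-s}{s+3}=n-s$. If no two of these leaves were adjacent in $G$ and no two had a common $G$-neighbour, the sets $N_G(\ell_i)$ would be pairwise disjoint and avoid $\{\ell_1,\dots,\ell_{s+3}\}$, forcing $n-s\le\bigl|\bigcup_iN_G(\ell_i)\bigr|\le n-(s+3)$, which is absurd. Choosing the $s+3$ leaves of maximum total $G$-degree and arguing the same way yields a strict surplus, hence several such ``close'' pairs; since this surplus vanishes for the extremal graphs, it is precisely what one must convert into a decrease of $b(G)$.

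Given a close pair $\ell_i,\ell_j$ --- say with $x\in N_G(\ell_i)\cap N_G(\ell_j)$, possibly $x=\ell_j$ --- adding $\ell_ix$ to $T$ creates a unique cycle, and deleting a suitable edge of it returns a spanning tree. The subtle point, and the reason ``number of branch vertices'' resists the swap arguments behind the few-leaves theorem of Broersma--Tuinstra and Win's bounded-degree theorem, is that the naive deletion can merely trade one branch vertex for another: deleting $\ell_i$'s tree-edge lowers its parent's degree but may push $\deg_T(x)$ from $2$ to $3$. One would instead follow the $T$-path between the two branch vertices straddling the insertion, delete an edge incident to a degree-exactly-$3$ branch vertex on it so that vertex drops to degree $2$, and check that no compensating branch vertex of equal or larger degree appears; if only the tie-breaking parameter improves, one iterates. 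When no improving exchange exists, the accumulated local obstructions pin $T$, hence $G$, into a rigid shape, and the argument passes to a finite case analysis of these configurations.

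\textbf{The extremal analysis, and the main obstacle.} This case analysis must use $\delta(G)\ge\frac{n-s}{s+3}$ essentially, not merely $\delta(G)\ge\frac{n-s}{s+3}-1$ --- the latter is false, as witnessed by the extremal constructions with $b(G)=s+1$. For a rigid $G$ one exploits the structure directly: large dense parts are Hamilton-connected, so each can be threaded by a path between any two prescribed endpoints, and a Hall/matching-type count of how these parts attach through the bottleneck vertices, together with the single extra unit of minimum degree, should build a spanning tree with at most $s$ branch vertices --- the final contradiction. I expect the crux to be threefold: (i) making an exchange genuinely decrease $b(G)$, or the secondary parameter, in full generality, since this quantity is not monotone under single edge swaps; (ii) determining the complete list of extremal and near-extremal graphs, since a single slightly denser bad example would disprove the conjecture, so the classification must be exhaustive and must cover the divisibility cases where $s+3\nmid n-s$ and the parts have unequal sizes; and (iii) the fact that the known approximate theorem loses a $\Theta(\eps n)$ term and cannot be cited verbatim, so either the rigid regime is reached self-contained as above, or one first upgrades the approximate theorem to a stability statement --- no spanning tree with $\le s$ branch vertices together with $\delta(G)\ge(\tfrac{1}{s+3}-\eps)n$ force $G$ to be $\eps'$-close to an extremal configuration --- and only then runs the extremal analysis. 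The solved base case $s=1$ (spanning spiders) suggests organising the whole proof by induction on $s$, peeling off one bottleneck vertex with one dense part, although $\frac{n-s}{s+3}$ does not transform cleanly under vertex deletion, so such an induction would have to carry a strengthened hypothesis.
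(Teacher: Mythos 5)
First, a point of scope: the statement you are proving is Conjecture~\ref{mainconj}, which the paper does \emph{not} prove --- the paper only establishes the asymptotic version, Theorem~\ref{thm:main}, with minimum degree $(\frac{1}{s+3}+\gamma)n$ and $n$ large, via a robust partition, star-matchings (through Pulleyblank's Gallai--Edmonds-type theorem), a regularity/``fractional-random-reduced-graph'' argument, and the absorbing method. So there is no exact proof in the paper to compare yours against; the conjecture is open even now, and the paper stresses that before this work not even the approximate version was known for any $s\ge 1$.

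As for your proposal itself: it is a strategy, not a proof, and the gap is exactly where you locate it. The counting step is fine (if $s+3$ leaves of $T$ were pairwise non-adjacent with pairwise disjoint neighbourhoods, then $n-s\le |\bigcup_i N_G(\ell_i)|\le n-(s+3)$, a contradiction, so a ``close'' pair of leaves exists); this is the standard Broersma--Tuinstra opening. But the conversion of a close pair into a spanning tree with fewer branch vertices is never carried out: your description (``delete an edge incident to a degree-exactly-$3$ branch vertex on the path \dots and check that no compensating branch vertex of equal or larger degree appears; if only the tie-breaking parameter improves, one iterates'') is an intention, not an argument, and as you yourself note the quantity $b(G)$ is not monotone under single swaps --- this non-monotonicity is precisely why no exchange proof of the conjecture is known, even for $s=2$ with the weaker Gargano et al.\ bound. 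Likewise, the ``rigid shape'' / extremal classification and the stability upgrade of the approximate theorem are listed as things that ``should'' or ``must'' be done, with no construction of the configurations, no proof that the list is exhaustive, and no treatment of the divisibility and unequal-part cases you flag. So the proposal identifies the right difficulties but resolves none of them; it cannot be accepted as a proof of Conjecture~\ref{mainconj}, and it does not recover even the weaker asymptotic statement that the paper actually proves by entirely different (regularity/absorption) means.
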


Note that even the approximate version of the conjecture by Flandrin et al. has not been verified for any $s\geq 1$ and the original (weaker) conjecture of Gargano et al. has not been verified for any $s\geq 2$.
The goal of this paper is to prove Conjecture \ref{mainconj} asymptotically.

\begin{theorem} \label{thm:main}
Let $s\in \bbZ$ and $\gamma >0$.
Then there exists $n_0 = n_0 (\gamma, s)$ such that every connected graph~$G$ on $n \ge n_0$ vertices with $\delta(G)\geq (\frac{1}{s+3}+\gamma) n $ contains a spanning tree with at most $s$ branch vertices.
\end{theorem}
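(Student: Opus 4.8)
The plan is to reformulate the conclusion, split on a connectivity dichotomy, and in each case assemble the tree as a bounded-width ``caterpillar of paths''. First, observe that $G$ has a spanning tree with at most $s$ branch vertices if and only if there are a set $S\subseteq V(G)$ with $|S|\le s$ and a connected spanning subgraph $H\subseteq G$ in which $H-S$ is a disjoint union of paths and every vertex of $V(G)\setminus S$ has degree at most $2$: any spanning tree of such an $H$ has all of its branch vertices inside $S$, and the converse is trivial. So it is enough to find a bounded ``core'' $S$ carrying all branch vertices together with a partition of $V(G)\setminus S$ into paths that attaches to $S$ only at path-endpoints; turning such a partition into $H$ is then a connectivity check on the incidence structure between the paths (each with two ports) and $S$.

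Fix constants $0<\gamma'\ll\gamma$. If $G$ has a cut set $T$ with $|T|\le\gamma'n$, the minimum-degree hypothesis is already decisive: each component $C$ of $G-T$ has $|C|\ge\delta(G)-|T|+1$, so $G-T$ has fewer than $(n-|T|)/(\delta(G)-|T|+1)$ components, which is strictly below $s+3$ --- hence at most $s+2$ for $n$ large --- because $\delta(G)\ge(\tfrac1{s+3}+\gamma)n$ and $|T|\le\gamma'n\ll\gamma n$. Each component inherits $\delta(G[C])\ge\delta(G)-|T|$, so those $C$ with $|C|\le 2(\delta(G)-|T|)$ have Hamilton paths by Dirac's theorem (contributing no branch vertices), while the remaining components have $|C|<n$ and therefore satisfy $\delta(G[C])\ge(\tfrac1{s'+3}+\tfrac\gamma2)|C|$ for some $s'<s$, so one recurses on them. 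Finally one threads a spanning path through each component and glues the at most $s+2$ component-paths together through $T$, using connectedness of the incidence structure between $T$ and the components and organising the gluing so that components get chained through degree-two vertices of $T$ whenever possible, a fresh branch vertex being spent only when a vertex of $T$ or of a component is genuinely forced to be a fork.

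If instead $G$ has no cut set of size at most $\gamma'n$, then $(\gamma'n)$-connectivity together with $\delta(G)\ge(\tfrac1{s+3}+\gamma)n$ already yields a spanning tree with very few branch vertices: a P\'osa-type rotation--extension argument on a longest path meets no obstruction from small cuts and produces either a Hamilton path or, in the essentially unbalanced-bipartite exceptional cases, a spanning spider; alternatively one exploits that any independent set $I$ with $|I|=s+3$ has $\sum_{v\in I}\deg(v)>n$, forcing large common neighbourhoods inside $I$ and hence a good central hub.

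The crux --- and the step I expect to be the main obstacle --- is the branch-vertex accounting in the cut-set case: one must show that at most $s+2$ nearly-complete pieces glued through a small cut can always be reassembled into a spanning tree with only $s$ branch vertices, and keeping the sum of the recursion costs and the gluing costs at $s$ is delicate. This is exactly where the strict inequality $\delta(G)>\tfrac n{s+3}$ (rather than $\delta(G)\ge\tfrac{n-s}{s+3}$) must be used to the full: at the true threshold the extremal construction of $s+3$ nearly disjoint cliques glued minimally has no such tree, so the analysis has to track how the surplus $\gamma n$ gets spent --- either enlarging $T$ and thereby buying more connector vertices, or forcing the pieces to overlap or to be fewer --- and then verify that in each regime the gluing fits the budget of $s$ branch vertices. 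Making the recursion and the path-threading inside components exactly spanning, rather than merely almost-spanning, is the remaining technical burden.
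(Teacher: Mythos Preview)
Your proposal has the right high-level shape --- partition into well-connected pieces, handle each piece internally, then glue --- and this is also the skeleton of the paper's proof. But you have the difficulty exactly reversed. In the paper the gluing is a few lines of induction with precise bookkeeping (set $s_j := \lfloor (s+3)|V_j|/(n+1)\rfloor$, check $\sum_j s_j\le s+2$, and observe each connecting edge costs at most one extra branch vertex), while the ``highly connected'' case is where almost all the work lives. Your treatment of that case is not a proof: a P\'osa rotation--extension argument at minimum degree $(\tfrac{1}{s+3}+\gamma)n$ does not produce a Hamilton path (that needs degree about $n/2$) nor a spanning spider (that needs about $n/3$), and the observation that any independent $(s+3)$-set has degree sum exceeding $n$ gives no control on the global structure you need. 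The statement you are asserting here is precisely Lemma~\ref{lma:component}, and the paper proves it with Szemer\'edi regularity, an absorbing path, and a new gadget (the fractional-random-reduced graph of Definition~\ref{def:R*}) that lets a star-matching in a weighted reduced object be lifted to a spanning star-cycle in $G$. None of this is replaced by a rotation argument.

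There are also gaps in the cut-set branch. Your claim that a large component $C$ automatically satisfies $\delta(G[C])\ge(\tfrac{1}{s'+3}+\tfrac{\gamma}{2})|C|$ for some $s'<s$ is false when $|C|$ is close to $n$: you only know $\delta(G[C])\ge(\tfrac{1}{s+3}+\gamma-\gamma')n$, and if $|C|>\tfrac{s+2}{s+3}n$ this does not push the relative minimum degree above $\tfrac{1}{s+2}$, so the recursion does not decrease $s$ and need not terminate. The paper sidesteps recursion entirely: Lemma~\ref{lma:partition} produces in one step a partition into at most $s+2$ robust parts, each part is handled directly by Lemma~\ref{lma:component} to get a spanning $t_j$-star-cycle with $t_j\le s_j-1$, and the final count is $k-2+\sum_j t_j\le\sum_j s_j-2\le s$. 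The ``remaining technical burden'' you flag at the end --- making things exactly spanning --- is also substantial and is what the absorbing lemma is for.
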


The following example shows that our result is asymptotically best possible and that Conjecture~\ref{mainconj} is best possible if true.
%First note that if $s+3$ divides $n$, then one can obtain a graph $G$ on $n$ vertices with $\delta(G)=\frac{n}{s+3}-1$ which contains no spanning tree with at most $s$ branch vertices by identifying each vertex of a path on $s+3$ vertices with a complete graph on $\frac{n}{s+3}$ vertices since the $s+1$ internal vertices of the path will be branch vertices in every spanning tree.  

\begin{example}\label{ex:lowerbound}
For all $s,m\in \mathbb{Z}^+$ with $m\geq 2$, there exists a connected graph $G$ on $n=(s+3)m-2$ vertices with $\delta(G)=\frac{n-s-1}{s+3}$ such that every spanning tree of $G$ has more than $s$ branch vertices.
\end{example}

\begin{proof}
Let $P = b_1 b_2 \dots b_{s+1}$ be a path on $s+1$ vertices and $H_1, H_2, \dots, H_{s+3}$  copies of complete graph on $m$ vertices such that $P, H_1, \dots, H_{s+3}$ are vertex-disjoint. 
For each $1 \le i \le s+1$, identify $b_i$ with a vertex of~$H_i$. 
We further identify $b_1$ and $b_{s+1}$ with a vertex of $H_{s+2}$ and $H_{s+3}$, respectively. 
We call the resulting graph~$G$, see Figure~\ref{lowerbound} for an example. 
Clearly $G$ has $n = (s+3)m-2$ vertices and $\delta(G) = m -1 = \frac{n-s-1}{s+3}$. 
Note that each $b_i$ is a branch vertex in any spanning tree of~$G$.  
\end{proof}

%
%Let $H_1$ be a graph on $2m-1$ vertices obtained from two copies of $K_m$ by identifying a vertex $v$ and let $H_2$ be the graph obtained by taking the join of an independent set on $m$ vertices with a $K_{m-1}$.  Let $n=(s+3)m-2$ and let $G$ be a graph obtained from a path $P=b_1b_2\dots b_{s+1}$ on $s+1$ vertices by identifying for each $2\leq i\leq s$, the vertex $b_i$ with a vertex of a $K_m$ and for $j\in \{1, {s+1}\}$, either by identifying $b_j$ with the cut vertex of $H_1$, or by identifying $b_j$ with some vertex in the smaller side of $H_2$; that is, adding some edge from $b_2$ (if $j=1$) or $b_s$ (if $j=s+1$) to the smaller side of $H_2$.  It is clear that $\delta(G) = m-1=\frac{n-s-1}{s+3}$ and for any spanning tree $T$, each $b_i$ will be a branch vertex in $T$.  

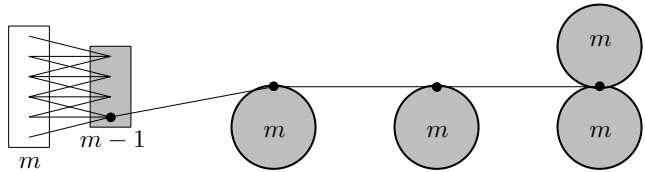
\begin{figure}[ht]
\centering
\begin{tikzpicture}[scale=1, line width = 1.5pt]
	\foreach \x in { 1,2,4,5}
		{
			\filldraw[fill=black!20, line width = 0.5 pt] ({2*\x},-0.75) circle [radius=0.75]; 
			\node at ({2*\x},-0.75)   {$K_m$};
		}
		\foreach \x in {1,3,5}
					{
						\filldraw[fill=black!20, line width = 0.5 pt] ({2*\x},0.75) circle [radius=0.75];
						\node at ({2*\x},0.75)   {$K_m$};
					}
		\draw (2,0) -- (10,0);
		
		\foreach \x in { 1,2,3,4,5 }
			{ 
				\node at ({2*\x},0)  (n\x) {};
				\filldraw[fill=black] (n\x) circle (1pt) ;
			}
		
		\node at (1.4,0)   {$b_1$};
		\node at (4,0.3)  {$b_2$};
		\node at (6,-0.3)   {$b_3$};
		\node at (8,0.3)  {$b_4$};
		\node at (10.6,0)  {$b_{5}$};
		\end{tikzpicture}
\caption[]{An example for the case $s=4$.}
\label{lowerbound}
\end{figure}

Our proof of Theorem~\ref{thm:main} uses the absorbing method, first systematically introduced by R\"odl, Ruci\'nski and Szemer\'edi~\cite{RRSz}, together with a non-standard use of Szemer\'edi's regularity lemma~\cite{Sz}.  In Section \ref{sec:partition} we discuss the canonical partition of the graph with linear minimum degree and then after stating Lemma~\ref{lma:component}, the main lemma of the paper, we use it to  deduce Theorem~\ref{thm:main}. In Section~\ref{sec:matching} we prepare for the proof of Lemma~\ref{lma:component} by proving a more basic result about (fractional) matchings. In Section~\ref{sec:regularity} we state the regularity lemma along with a few basic supporting lemmas. Finally, in Section~\ref{sec:cycle} we use the regularity lemma and the absorbing method together with the results of the previous section to prove Lemma~\ref{lma:component} which completes the result.  

\subsection{Notation}

For $n \in \mathbb{Z}^+$, we write $[n]$ for $\{1, \dots, n\}$.
We ignore floors and ceilings whenever they are not crucial to the calculation.  Throughout the paper, we will write $\alpha \ll \beta$ to mean that given $\beta$, we can choose $\alpha$ small enough so that $\alpha$ satisfies all of necessary conditions throughout the proof.  %More formally, we can set $\alpha := \min \{f_1(\beta), f_2(\beta), \dots, f_k(\beta)\}$, where each $f_i(\beta)$ corresponds to the maximum value of $\alpha$ allowed so that the corresponding argument in the proof holds. 
In order to simplify the presentation, we will not determine these functions explicitly.

Let $G$ be a graph and $U,W \subseteq V(G)$ be disjoint.
We write $\overline{U}$ for $V(G)\setminus U$.
We denote by $G[U]$ and $G[U,W]$ the subgraph of~$G$ induced on~$U$ and the bipartite subgraph of $G$ induced by the partition $\{U,W\}$. 
We write $G \setminus U$ for $G[\overline {U}]$ and $e(U,W)$ for $e(G[U,W])$.
For $v \in V(G)$, $d_G(v,U)$ denotes the number of neighbors of~$v$ in~$U$.  
We write $N(U)$ for $\bigcup_{u \in U} N(u)$.
For graphs $G,H$, we write $G-H$ for the subgraph of~$G$ with vertex set $V(G-H)=V(G)$ and edge set $E(G-H) = E(G) \setminus E(H)$.

\section{Overview of the proof} \label{sec:sketch}

Our proof splits into two main parts.  First we show that if $G$ is a graph with minimum degree at least $(1/r+\gamma)n$, then we can find a  partition of $V(G)$ into at most $r-1$ parts $\{V_1, \dots, V_k\}$ having the property that for each $i$, $G[V_i]$ has no sparse cuts and most vertices in $V_i$ have degree at least $(1/r+\gamma/2)n$ in $G[V_i]$ while all other vertices in $V_i$ have linear minimum degree in $G[V_i]$.  Let us say that we have partitioned $G$ into ``robust'' subgraphs.

The second part of the proof focuses on these so-called robust subgraphs obtained above.  Let $t\geq 1$ and let $G$ be a graph on $n$ vertices with linear minimum degree having no sparse cuts in which most of the vertices have degree at least $(\frac{1}{t+3}+\gamma)n$.  We will show that not only does $G$ contain a spanning tree with at most $t$ branch vertices, but $G$ contains a cycle $C$ and a set $K\subseteq V(C)$ with $|K|\leq t$ such that for all $v\in V(G)\setminus V(C)$, $v$ has a neighbor in $K$.  It is clear that such a structure, which we call a ``star-cycle'', contains a spanning tree with at most $t$ branch vertices.  

The real heart of the proof lies in finding these spanning star-cycles in the robust subgraphs. 
By using the absorbing method in a particular form proved by the first author and Nelsen \cite{DN},
%(which was \DIFdelbegin \DIFdel{systemically }\DIFdelend \DIFaddbegin \DIFadd{systematically }\DIFaddend introduced by  R\"odl, Ruci\'nski and Szemer\'edi~\cite{RRSz}) 
we can reduce the problem to finding a nearly spanning star-cycle. 
It is now standard in nearly spanning subgraph problems to use Sz\'emer\'edi's regularity lemma to reduce the problem to finding a simpler structure in the so-called reduced graph. 
For instance, if one were looking for a Hamiltonian cycle, it would be natural to apply the regularity lemma and prove that the reduced graph is connected and contains a perfect matching.
In our case, the simpler structure that we wish to find is a collection of vertex-disjoint edges and stars which we call a ``star-matching''.
Unfortunately it may not be sufficient to simply find a star-matching in the reduced graph, as this may not correspond to the desired star-cycle in the original graph.
Namely, there is no relationship between the maximum degree of the original graph and its reduced graph.  
For example, suppose that $G$ is the binomial random graph on~$n$ with each edge chosen independently with probability~$1/2$.
Then (typically) we have $\Delta (G) \approx n/2$ and the reduced graph~$R$ of $G$ is a complete graph on $k$ vertices for some large~$k$. 
Clearly, a spanning star~$S$ in~$R$ is a star-matching.
However, in order to `convert' $S$ into a nearly spanning star-cycle with one branch vertex in~$G$, we would need to find a star in $G$ with degree greater than $(1-1/k-o(1))n > n/2\approx \Delta(G)$. 
To get around this issue, we find the star-matching in the ``fractional-random-reduced-graph''~$R^*$ (see Definition~\ref{def:R*}) instead of the reduced graph, where $\Delta(R^*)$ `respects'~$\Delta(G)$.

Finally, to combine the two parts of the proof, we start with a connected graph $G$ having minimum degree at least $(\frac{1}{s+3}+\gamma)n$.  We obtain a robust partition of $G$ and inside each part of the partition we find a star-cycle having the correct number of stars depending on the relative degrees inside that part.  Then we use the connectivity of $G$ to find edges connecting the spanning star-cycles from each part of the partition.  The minimum degree of $G$ will put bounds on the number of parts of the partition and the relative degrees inside those parts in such a way that the obtained spanning tree has at most $s$ branch vertices.

\section{Sparse cuts and robust partitions}\label{sec:partition}

Let $0 < \alpha, \eta \le 1$ and $G$ a graph on $n$ vertices. 
For $X \subseteq V(G)$, we say that $(X, \overline{X})$ is an \emph{$\alpha$-sparse cut} if $e(X, \overline{X})<\alpha |X||\overline{X}|$.
%We say that a graph $G$ has an \emph{$\alpha$-sparse cut} if there exists $X \subseteq V(G)$ such that $e(X, \overline{X}) < \alpha |X||\overline{X}|$.
Moreover, $G$ is \emph{$(\eta, \alpha)$-robust} if $\delta(G)\geq \eta n$ and $G$ has no $\alpha$-sparse cuts.  

We will use the following two simple observations from~\cite{DN}.

\begin{observation}[{\cite[Observation~4.4]{DN}}]\label{size}
Let $0<\alpha\leq \eta/2$, let $G$ be a graph on $n$ vertices, and let $\{X_1, X_2\}$ be a partition of $V(G)$ with $|X_1|\leq |X_2|$.  If $\delta(G) \ge \eta n$ and $|X_1| \le \eta n/2$, then $e(X_1, X_2) \ge \alpha |X_1| |X_2|$.
\end{observation}

\begin{observation}[{\cite[Observation~4.7]{DN}}]\label{slicerobust}
Let $0<\alpha\leq \eta/2$ and let $G$ be a graph on $n$ vertices.  If $G$ is $(\eta, \alpha)$-robust and $Z\subseteq V(G)$ with $|Z|\leq \alpha \eta n/8$, then $G \setminus Z$ is $(\eta/2, \alpha/2)$-robust.
\end{observation}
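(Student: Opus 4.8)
The plan is to verify the two defining properties of $(\eta/2,\alpha/2)$-robustness for $G-Z$ directly, using that $|Z|$ is tiny compared to $\alpha\eta n$. Write $n' = |V(G-Z)| = n - |Z|$. Since $|Z| \le \alpha\eta n/8 \le n/8$ (as $\alpha,\eta \le 1$), we have $n' \ge 7n/8$, so $n$ and $n'$ are comparable; this is the slack we will spend.

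\emph{Minimum degree.} For any $v \in V(G-Z)$, deleting $Z$ removes at most $|Z| \le \alpha\eta n/8 \le \eta n/8$ neighbours of $v$, so $d_{G-Z}(v) \ge \delta(G) - |Z| \ge \eta n - \eta n/8 = \tfrac{7}{8}\eta n \ge \tfrac12 \eta n \ge \tfrac{\eta}{2} n'$, since $n' \le n$. Hence $\delta(G-Z) \ge (\eta/2) n'$.

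\emph{No $(\alpha/2)$-sparse cut.} Suppose for contradiction that $G-Z$ has an $(\alpha/2)$-sparse cut, i.e.\ there is a partition $\{X_1, X_2\}$ of $V(G-Z)$ with $e_{G-Z}(X_1, X_2) < \tfrac{\alpha}{2}|X_1||X_2|$. Consider the partition $\{X_1, X_2 \cup Z\}$ of $V(G)$. We estimate
\[
e_G(X_1, X_2 \cup Z) \;\le\; e_{G-Z}(X_1,X_2) + |X_1||Z| \;<\; \tfrac{\alpha}{2}|X_1||X_2| + \tfrac{\alpha\eta n}{8}|X_1|.
\]
Now I split into two cases according to the size of $X_1$. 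First, if $|X_1| \le \eta n / 2$ (taking $X_1$ to be the smaller side, or handling the symmetric case with $X_2 \cup Z$), then since $0 < \alpha \le \eta/2$ and $\delta(G) \ge \eta n$, Observation~\ref{size} applied to the partition $\{X_1, X_2 \cup Z\}$ of $V(G)$ gives $e_G(X_1, X_2\cup Z) \ge \alpha |X_1||X_2 \cup Z| \ge \tfrac{\alpha}{2}|X_1||X_2\cup Z| + \tfrac{\alpha}{2}|X_1| \cdot \tfrac{n}{2}$ — more than enough to contradict the displayed upper bound once one checks $|X_2 \cup Z| \ge n/2$; I would instead argue directly that Observation~\ref{size} already yields $e_G(X_1, X_2 \cup Z) \ge \alpha|X_1||X_2\cup Z|$ while the display gives $e_G(X_1, X_2\cup Z) < \tfrac{\alpha}{2}|X_1||X_2| + \tfrac{\alpha\eta n}{8}|X_1| \le \tfrac{\alpha}{2}|X_1||X_2\cup Z| + \tfrac{\alpha}{8}|X_1||X_2\cup Z| < \alpha|X_1||X_2\cup Z|$, using $|X_2 \cup Z| \ge \eta n \ge \eta n/8 \cdot 1$... the clean version: since $|X_2\cup Z|\ge n - |X_1| \ge n - \eta n/2 \ge n/2 \ge \eta n/2$, we get $\tfrac{\alpha\eta n}{8}|X_1| \le \tfrac{\alpha}{4}|X_1||X_2\cup Z|$, so the display gives $e_G(X_1,X_2\cup Z) < \tfrac{3\alpha}{4}|X_1||X_2\cup Z| < \alpha|X_1||X_2\cup Z|$, contradicting Observation~\ref{size}. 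Second, if $|X_1| > \eta n/2$, then also $|X_2\cup Z| > |X_2| = n' - |X_1|$; here one cannot have $|X_1|$ and $|X_2 \cup Z|$ both small, so I bound $|X_2| \ge |X_2 \cup Z| - |Z| \ge |X_2\cup Z| - \tfrac{\alpha\eta n}{8} \ge |X_2 \cup Z|(1 - o(1))$ provided $|X_2 \cup Z|$ is linear, and combine with $|X_1||Z| \le \tfrac{\alpha\eta n}{8}|X_1| \le \tfrac{\alpha}{4}|X_1||X_2\cup Z|$ when $|X_2 \cup Z| \ge \eta n/2$; the remaining corner case $|X_2 \cup Z| < \eta n/2$ forces $|Z|$ essentially all of a tiny $X_2\cup Z$ and is handled by the symmetric application of Observation~\ref{size} with the roles of the two sides swapped. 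In every case $\{X_1, X_2\cup Z\}$ is an $\alpha$-sparse cut of $G$ (after multiplying the bound out, $e_G(X_1,X_2\cup Z) < \alpha|X_1||X_2\cup Z|$), contradicting that $G$ is $(\eta,\alpha)$-robust.

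The only mildly delicate point is keeping track of which side is "small": Observation~\ref{size} requires the small side to have size at most $\eta n/2$, and after absorbing $Z$ into $X_2$ one must make sure the side we feed to Observation~\ref{size} still satisfies that bound, or else invoke it with the sides reversed. Since $|Z|$ is only $\alpha\eta n/8$, this bookkeeping costs only a constant factor and the inequalities close comfortably; there is no real obstacle, just case management. (Indeed this is exactly why the hypothesis is phrased with the generous slack $|Z| \le \alpha\eta n/8$ rather than something tighter.)
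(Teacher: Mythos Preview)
The paper does not prove this observation; it is simply cited from~\cite{DN}. Your argument is correct in substance, but it is considerably more convoluted than necessary, and the write-up reads as an exploratory sketch (with false starts, ``I would instead argue\dots'', trailing ellipses) rather than a finished proof.

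Two simplifications remove all the mess in the sparse-cut half. First, your repeated appeals to Observation~\ref{size} are redundant: since $G$ is $(\eta,\alpha)$-robust, it has \emph{no} $\alpha$-sparse cut, so $e_G(Y_1,Y_2)\ge\alpha|Y_1||Y_2|$ already holds for \emph{every} bipartition $\{Y_1,Y_2\}$ of $V(G)$, regardless of side sizes. Observation~\ref{size} is strictly weaker and plays no role here. Second, the case analysis vanishes if you assume without loss of generality that $|X_1|\le|X_2|$ and place $Z$ on the larger side. Then $|X_2|\ge n'/2\ge 7n/16$, and dividing the target inequality $e_G(X_1,X_2\cup Z)<\alpha|X_1||X_2\cup Z|$ through by $|X_1|$, it suffices to verify
\[
\tfrac{\alpha}{2}|X_2|+|Z|<\alpha(|X_2|+|Z|),
\qquad\text{i.e.}\qquad
(1-\alpha)|Z|<\tfrac{\alpha}{2}|X_2|,
\]
which holds since $(1-\alpha)|Z|<|Z|\le \alpha\eta n/8\le \alpha n/8 < \tfrac{\alpha}{2}\cdot\tfrac{7n}{16}\le\tfrac{\alpha}{2}|X_2|$. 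This contradicts robustness of $G$ directly, with no cases.

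Your ``corner case $|X_2\cup Z|<\eta n/2$'' via side-swapping does work once interpreted as placing $Z$ with $X_1$ instead, but describing it as a ``symmetric application of Observation~\ref{size}'' is misleading: what you actually use is again just the no-$\alpha$-sparse-cut hypothesis, now applied to $\{X_1\cup Z,X_2\}$.
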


%\begin{proof}
%The minimum degree condition follows immediately since $|Z|\leq \alpha n/2\leq  \eta n/2$.  So suppose there is a partition $\{X_1, X_2\}$ of $V(G)\setminus Z$ with $\sqrt{\alpha}(n-|Z|)\leq |X_1|\leq |X_2|$ such that $e(X_1, X_2)<\frac{\alpha}{2}(n-|Z|)^2$. So we have 
%\begin{align*}
%e(X_1\cup Z, X_2)= e(X_1, X_2)+e(Z, X_2)< \frac{\alpha}{2}(n-|Z|)^2+ \frac{\alpha}{2}n|X_2|<\alpha n^2.
%\end{align*}
%which implies that $(X_1\cup Z, X_2)$ is an $\alpha$-sparse cut in $G$, contradicting the original assumption. 
%\end{proof}

The following two lemmas are similar to Lemmas~6.1 and~6.2 in~\cite{CFS}; however, we cannot directly quote those results here as we need to use the fact that the relative degree of most vertices in each part of the partition is very close to their overall degree.

\begin{lemma} \label{prop:sparse}
Let $0 < \alpha < \eta \le \delta$ with $8 \alpha \le \eta$.
Let $G$ be a graph on $n$ vertices such that $\delta(G)\geq \eta n$ and $d(v) \ge \delta n$ for all but at most $\alpha n$ vertices $v\in V(G)$.
If $G$ has an $\alpha^2$-sparse cut, then there exists a partition~$\{Y_1,Y_2\}$ of~$V(G)$ such that, for all $i \in [2]$,
\begin{enumerate} 
	\item $ | Y_i | \ge  (\delta - 3 \alpha) n$;
	\item $\delta (G[Y_i]) \ge \delta(G) /2$, and $d(v,Y_i) \ge (\delta - 3 \alpha) n $ for all but at most $3  \alpha n$ vertices $v \in Y_i$.
\end{enumerate}
\end{lemma}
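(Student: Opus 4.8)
The plan is to start from the given $\alpha$-sparse cut $(V_1,V_2)$ and clean it up in two stages: first move all low-degree vertices to whichever side already contains most of their neighbours, and then iteratively move across any vertex that has too few neighbours on its own side. First I would observe that since $(V_1,V_2)$ is $\alpha$-sparse, $e(V_1,V_2)<\alpha|V_1||V_2|$, so by averaging the number of vertices $v$ with $d(v,\overline{V_i})\ge \sqrt{\alpha}\,n$ (for the side $V_i$ containing $v$) is at most, say, $2\sqrt{\alpha}\,n$ on each side. Combined with the hypothesis that all but $4\sqrt{\alpha}\,n$ vertices have degree $\ge \delta n$, this already shows both $|V_1|,|V_2|\ge (\delta-3\sqrt{\alpha})n$: a vertex $v$ of degree $\ge\delta n$ with $d(v,\overline{V_i})<\sqrt{\alpha}\,n$ forces $|V_i|\ge d(v,V_i)\ge(\delta-\sqrt{\alpha})n$, and such a $v$ exists on each side. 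Also by Observation~\ref{size} (applied with the roles of $\eta$ there taken appropriately), a sparse cut cannot have a tiny side, so the two sides are genuinely linear.

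Next I would perform the cleaning. Let $B$ be the (small) set of "bad" vertices: those of degree $<\delta n$, together with those $v$ on side $V_i$ with $d(v,\overline{V_i})\ge\sqrt{\alpha}\,n$. We have $|B|\le 6\sqrt{\alpha}\,n$, say. For each $v\notin B$ put it on the side where the bulk of its neighbourhood lies (it is already there); for each $v\in B$ we will re-assign it, and we also need a small "repair" loop. Concretely, run the following process starting from $(V_1,V_2)$: while some vertex $v$ has $d(v,V_i')<(\delta-3\sqrt{\alpha})n$ where $V_i'$ is its current side, move $v$ to the other side. Each move strictly decreases $e(V_1',V_2')$ by at least (new internal degree) $-$ (old internal degree) $>$ 0... more carefully, moving $v$ changes $e$ by $d(v,\text{current side})-d(v,\text{other side})$, and we only move when this is negative, so $e(V_1',V_2')$ strictly decreases; since $e\ge 0$ the process terminates. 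On termination every vertex $v$ has $d(v,V_i')\ge(\delta-3\sqrt{\alpha})n$ for its side $V_i'$, which is conclusion (2) — and in particular gives $\delta(G[V_i'])\ge(\delta-3\sqrt{\alpha})n\ge\delta(G)/2$ once $\alpha$ is small relative to $\eta\le\delta$. It also gives $|V_i'|\ge(\delta-3\sqrt{\alpha})n$, which is (1), and (2)'s exceptional-vertex count holds trivially since in fact \emph{every} vertex is fine.

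The main obstacle is controlling how much mass moves during the repair loop, since if too many vertices migrate we can neither bound the new cut size (to show it is $\alpha^{1/4}$-sparse) nor be sure the sides stay large. The key quantitative point is: the total number of vertices ever moved is $O(\sqrt{\alpha}\,n)$. To see this, note a vertex $v$ is moved only if at that moment $d(v,V_i')<(\delta-3\sqrt{\alpha})n$ while $d(v)\ge\delta n$ for all but $4\sqrt{\alpha}n$ vertices, so $d(v,\overline{V_i'})\ge 2\sqrt{\alpha}\,n$; charging each move to $\sqrt{\alpha}\,n$ of the cross-edges incident to $v$ at that time, and using that each move only \emph{decreases} $e(V_1',V_2')$ from its initial value $<\alpha|V_1||V_2|\le\alpha n^2/4$, a standard potential/charging argument (each cross-edge can be charged a bounded number of times before being "used up") bounds the number of moves by $O(\sqrt{\alpha}\,n)$ — hence each side changes by at most $O(\sqrt{\alpha}\,n)$ vertices, preserving (1) after adjusting the constant. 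Finally, to check the new cut is $\alpha^{1/4}$-sparse: after cleaning, $e(V_1',V_2')=\sum_{v\in V_1'}d(v,V_2')$, and one shows the average of $d(v,\overline{V_i'})$ stays $O(\sqrt{\alpha})\,n=o(\alpha^{1/4}n)$, while $|V_1'|,|V_2'|=\Theta(n)$, so $e(V_1',V_2')\le o(\alpha^{1/4}n)\cdot n \le \alpha^{1/4}|V_1'||V_2'|$ for $\alpha$ small; I would track the constants so that the loss from $\sqrt{\alpha}$-type errors is absorbed into the gap between $\sqrt\alpha$ and $\alpha^{1/4}$.
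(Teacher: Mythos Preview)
Your repair loop has a genuine gap: the move condition you use, ``$d(v,V_i')<(\delta-3\sqrt{\alpha})n$'', does \emph{not} imply $d(v,V_i')<d(v,V_{3-i}')$, so your claim that each move strictly decreases $e(V_1',V_2')$ is unjustified. Concretely, take a vertex $v$ with $d(v)=\delta n$, $d(v,V_i')=(\delta-3\sqrt{\alpha})n-1$, and $d(v,V_{3-i}')=3\sqrt{\alpha}n+1$; your rule moves $v$, but since $\delta\gg\sqrt{\alpha}$ this \emph{increases} the number of crossing edges, and on the new side $v$ again violates the threshold, so the loop oscillates. Worse, a low-degree vertex with $d(v)=\eta n$ and $\eta<\delta-3\sqrt{\alpha}$ (which is allowed, since the hypothesis is only $\eta\le\delta$ and $\alpha\ll\eta$) can \emph{never} satisfy $d(v,V_i')\ge(\delta-3\sqrt{\alpha})n$ on either side, so the loop cannot terminate and your claim that ``every vertex is fine'' at the end is simply impossible. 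This is also why the lemma statement permits $3\sqrt{\alpha}n$ exceptions in (ii) and only asks for $\delta(G[V_i'])\ge\delta(G)/2$ rather than $(\delta-3\sqrt{\alpha})n$.

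The paper's argument sidesteps the iteration entirely with a one-shot reassignment: set $V_0=\{v:d(v)<\delta n\}$ and $U_i=\{v\in V_i:d(v,V_{3-i})\ge\sqrt{\alpha}n\}$, let the ``core'' be $V_i'':=V_i\setminus(U_i\cup V_0)$ (so every core vertex automatically has $d(v,V_i'')\ge(\delta-3\sqrt{\alpha})n$), and then place each vertex of $V_0\cup U_1\cup U_2$ on whichever side minimizes crossing edges. That single majority-side assignment gives $\delta(G[V_i'])\ge\delta(G)/2$ directly, the at most $|V_0\cup U_1\cup U_2|\le 3\sqrt{\alpha}n$ reassigned vertices are exactly the exceptions in (ii), and the crude bound $e(V_1',V_2')\le e(V_1,V_2)+|V_0\cup U_1\cup U_2|\,n\le(\alpha+3\sqrt{\alpha})n^2$ is enough for $\alpha^{1/4}$-sparseness since both sides have order $\Theta(n)$. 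Your instincts about which vertices are ``bad'' and the scale of the error are right; the fix is to reassign them once by majority rather than iterate on a degree threshold they may be unable to meet.
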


\begin{proof}
Let $\{X_1,X_2\}$ be a partition of~$G$ such that $e(X_1, X_2)<\alpha^2 |X_1||X_2|$ and $|X_1| \le |X_2|$.
By Observation~\ref{size}, $|X_1|, |X_2| \ge \eta n /2$.
Let $U_0$ be the set of vertices $v \in V(G)$ such that $d(v) < \delta n$, so $|U_0| \le \alpha n $.
For $i\in [2]$, let $U_i$ be the set of vertices $v \in X_i$ such that $d( v, X_{3-i}) \ge \alpha n $.
Since $e(X_1,X_2) \le \alpha^2 n^2$, we have $|U_i| < \alpha n $.
Let $X'_i : = X_i \setminus (U_i \cup U_0)$.
Note that $|X'_i| \ge \eta n /2 - 2 \alpha n \ge \eta n/4$ and for all $v \in X_i'$, 
\begin{align}
	d (v, X_i') & \ge d(v) - d( v, X_{3-i}) - |U_i| - |U_0| \ge d(v) - 3 \alpha n
	\nonumber \\
	& \ge \max\{ (\delta -3 \alpha) n, d(v) / 2 \}. \label{eqn:d(vX'_i)}
\end{align}
since $\delta \ge \eta \ge \eta/2 + 3 \alpha$ as $\alpha \le \eta/8$.
Partition $U_0\cup U_1\cup U_2$ into $U_1'$ and $U_2'$, such that $e(X'_1 \cup U'_1, X'_2 \cup U'_2)$ is minimized.
Let $Y_i = X'_i \cup U'_i$ for $i \in [2]$. 
Clearly $\{Y_1,Y_2\}$ is a partition of~$V(G)$.
Since $|U_0\cup U_1\cup U_2| \le 3 \alpha n$, $\{Y_1,Y_2\}$ satisfies (i) and (ii) by~\eqref{eqn:d(vX'_i)}.
\end{proof}

The next lemma shows that a graph $G$ can be partitioned into $\{V_1, \dots, V_k \}$ such that each $G[V_i]$ has no sparse cut and most of the vertices in $G[V_i]$ have very few neighbors outside of $V_i$.

\begin{lemma} \label{lma:partition}
Let $r,n \in \bbZ$ with $r\geq 2$ and $\gamma, \alpha  >0$ be such that $2^{2r+3}\alpha \le \min\{1/r, \gamma\} $. 
If $G$ is a graph on $n$ vertices with $\delta(G) \ge ( 1/r + \gamma ) n$, then there exists a partition $\{ V_1, \dots, V_{k}\}$ of $V(G)$ with $k \le r-1$ such that for each $i \in [k]$:
\begin{enumerate}
	\item $ | V_i | >  ( 1 / r + \gamma/2 ) n$;
	\item $\delta (G[V_i]) \ge \delta(G) / 2^{k-1}$, and $d(v,V_i) \ge ( 1 / r + \gamma/2) n $ for all but at most $4^{k+1} \alpha n$ vertices $v \in V_i$;
	\item $G[V_i]$ has no $16^{k+1} \alpha^2$-sparse cuts.
\end{enumerate}
\end{lemma}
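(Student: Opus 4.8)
The plan is to build the partition by iterating Lemma~\ref{prop:sparse}. Start with the trivial partition $\{V(G)\}$. At a general stage we have a partition $\mathcal{P} = \{W_1,\dots,W_m\}$ of $V(G)$; if some $G[W_j]$ has an $\alpha_j$-sparse cut (for the appropriate scale parameter $\alpha_j$ at this level), we refine $W_j$ by splitting it into the two pieces $W_j', W_j''$ produced by Lemma~\ref{prop:sparse}, and repeat. To run Lemma~\ref{prop:sparse} at each step we need the hypothesis ``$d_{G[W_j]}(v,W_j)\ge \delta|W_j|$-type bound for all but a small fraction of $v\in W_j$''; this is exactly what clause (ii) of the lemma is set up to maintain, with the exceptional set growing by an additive $O(\sqrt{\alpha})n$ term and the minimum-degree guarantee halving, each time we split. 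Crucially, the ``typical'' relative degree stays $\ge(1/r+\gamma/2)n$ throughout, since Lemma~\ref{prop:sparse}(i)--(ii) only loses $O(\sqrt{\alpha})n\ll \gamma n$ in the relative degree — this is the point the authors flag as the reason they cannot simply quote Lemmas 6.1--6.2 of \cite{CFS}.

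The key quantitative observations are: (a) since a typical vertex $v\in W_j$ has $d(v,W_j)\ge (1/r+\gamma/2)n$, we get $|W_j| > (1/r+\gamma/2)n$, which is clause (i); and (b) since the parts are pairwise disjoint subsets of $V(G)$ and each has size $>(1/r+\gamma/2)n > n/r$, there can be at most $r-1$ of them — so the iteration terminates after at most $r-2$ splits, giving $k\le r-1$. Because the number of levels is bounded by $r-1$ (a constant once $r$ is fixed), we may fix a rapidly decreasing sequence of scale parameters $\alpha = \alpha_0 \gg \alpha_1 \gg \dots \gg \alpha_{r-1} = \alpha'$ in advance with $1/n_0 \ll \alpha' , \alpha \ll \gamma, 1/r$, using $\alpha_{i+1} = \alpha_i^{1/4}$-type relations dictated by Lemma~\ref{prop:sparse}; then a part created at level $i$ that still has an $\alpha_i$-sparse cut gets split using Lemma~\ref{prop:sparse} with parameter $\alpha_i$, yielding children with no $\alpha_i^{1/4}=\alpha_{i+1}$-sparse cut (wait — we need children with no $\alpha_i$-sparse cut to stop, which Lemma~\ref{prop:sparse} does not directly give; so instead one keeps splitting a part as long as it has an $\alpha$-sparse cut for the \emph{final} target scale $\alpha$, noting that each application of Lemma~\ref{prop:sparse} with input parameter $\alpha$ returns a cut that is only $\alpha^{1/4}$-sparse, hence the splitting at a fixed part can recurse at most a bounded number of times before no $\alpha$-sparse cut remains). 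The cleanest bookkeeping is: the \emph{depth} of the refinement tree is at most $r-1$ by the size argument, and at each node we apply Lemma~\ref{prop:sparse} with the scale appropriate to that depth, so finitely many distinct scales $\alpha > \alpha^{1/4} > \alpha^{1/16} > \cdots$ suffice and we set $\alpha'$ to be the smallest one reached.

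Finally, at the end of the iteration, every part $V_i$ of the resulting partition has no $\alpha$-sparse cut (clause (iii), after renaming $\alpha$ to be the coarsest scale that is still valid for all parts), has $|V_i| > (1/r+\gamma/2)n$ (clause (i)), and by the accumulated error bounds from Lemma~\ref{prop:sparse}(ii) across at most $r-1$ levels, all but at most $(r-1)\cdot O(\sqrt{\alpha})n \le \alpha' n$ vertices $v\in V_i$ satisfy $d(v,V_i)\ge (1/r+\gamma/2)n$, while $\delta(G[V_i])\ge \delta(G)/2^{k-1}$ since the minimum degree halves at most $k-1$ times along any root-to-leaf branch — this is clause (ii). I expect the main obstacle to be the bookkeeping of the scale parameters: one must verify that the sequence $\alpha \to \alpha^{1/4}$ can be iterated a bounded (but $r$-dependent) number of times while keeping every scale in the window $1/n_0 \ll \cdot \ll \gamma, 1/r$, and that the exceptional sets and degree losses, summed over all levels, stay below the final thresholds $\alpha' n$ and $\delta(G)/2^{k-1}$ respectively. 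The minimum-degree clause of Lemma~\ref{prop:sparse}(ii), namely $\delta(G[V_i'])\ge \delta(G)/2$, is what makes the halving rather than a total collapse, and is essential for controlling $\delta(G[V_i])$ after up to $r-1$ splits.
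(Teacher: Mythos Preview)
Your overall plan is correct and is essentially the paper's proof: iterate Lemma~\ref{prop:sparse}, use the size lower bound from (i) to cap the number of parts at $r-1$, and track the halving of $\delta$ and the accumulation of exceptional vertices across at most $r-1$ splits.

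One point of bookkeeping is genuinely backwards in your write-up, though, and you should fix it before filling in details. The scale parameters must \emph{increase} along the refinement, not decrease. The hypothesis of Lemma~\ref{prop:sparse} at a child $W$ requires the exceptional set to have size at most $4\sqrt{\beta}\,|W|$ where $\beta$ is the new scale; but the exceptional set inherited from the previous split at scale $\alpha$ has size about $3\sqrt{\alpha}\,n$, and since $|W|\ge n/r$ this forces $\beta \gtrsim r^2\alpha$, i.e.\ $\beta$ must be larger. The paper accordingly fixes $1/n_0 \ll \alpha_1 \le \tfrac{1}{16}\alpha_2 \le \cdots \le \tfrac{1}{16^{r-1}}\alpha_r \ll \gamma,1/r$, checks at step $j$ (with $j$ parts) for an $\alpha_{j+1}$-sparse cut, and stops by setting $\alpha:=\alpha_{j+1}$ and $\alpha':=3\sqrt{\alpha_j}$. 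Also, the ``$\alpha^{1/4}$-sparse'' in the conclusion of Lemma~\ref{prop:sparse} refers to the cut $\{V_1',V_2'\}$ of the \emph{ambient} graph; it says nothing about sparse cuts inside the children, so your parenthetical attempt to use it to bound recursion depth does not work --- the depth bound comes solely from the size argument you already gave.
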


\begin{proof}
Let $\mathcal{P}_1 := \{ V(G) \}$.
At step $j \le r-1$, suppose that we have already found a partition $\mathcal{P}_j = \{ U_1, \dots, U_j\}$ of $V(G)$ such that for all $i \in [j]$,
\begin{enumerate} [label ={\rm (\roman*$'$)}]
	\item $ | U_i | \ge ( 1 / r + \gamma - 4^{j+1} \alpha) n$;
	\item $\delta (G[U_i]) \ge \delta(G) / 2^{j-1}$, and  $d(v,U_i) \ge( 1 / r + \gamma - 4^{j+1} \alpha) n$ for all but at most $4^{j+1} \alpha n$ vertices $v \in U_i$.
\end{enumerate}
If $G[U_i]$ has no $16^{j+1} \alpha^2$-sparse cuts for all $i\in [j]$, then we are done by setting $k:=j$ and $V_i :=U_i$ for each $i \in [k]$.
So suppose without loss of generality that $G[U_1]$ has an $16^{j+1} \alpha^2$-sparse cut.
By Lemma~\ref{prop:sparse}, there is a partition $U'_1,U'_2$ of $U_1$ such that for $i \in [2]$
\begin{enumerate} [label ={\rm (\roman*$''$)}]
	\item $ | U'_i | \ge ( 1 / r + \gamma - 4^{j+1} \alpha) n  - 3 \cdot 4^{j+1} \alpha |U_1| \ge ( 1 / r + \gamma - 4^{j+2} \alpha  ) n $;
	\item $\delta (G[U'_i]) \ge \delta (G[U_1]) /2 \ge \delta(G) / 2^{j}$, and 
\[d(v,U'_i) \ge ( 1 / r + \gamma - 4^{j+1} \alpha) n  - 3 \cdot 4^{j+1} \alpha |U_1| \ge ( 1 / r + \gamma - 4^{j+2} \alpha  ) n\] for all but at most $3 \cdot 4^{j+1} \alpha  |U_1| \le 4^{j+2}  \alpha n$ vertices $v \in U'_i$.
\end{enumerate}
Set $\mathcal{P}_{j+1} := \{ U'_1,U'_2, U_2, \dots, U_j\}$ to be the partition of~$V(G)$ and note that (i$'$) implies that this process will end with a partition having at most $r-1$ parts.
\end{proof}

For $t \in \bbZ$, a \emph{$t$-star-cycle} is a union of cycle $C$ and $t$ vertex-disjoint stars $S_1, \dots, S_t$ such that the centers of stars are in~$V(C)$ and the leaves of the stars are not in $V(C)$.  The next lemma shows that each $G[V_i]$ obtained from Lemma~\ref{lma:partition} contains a spanning $t$-star-cycle for some $t$ depending on the relative degrees.  In fact, we show that when $G$ has no $\alpha$-sparse cuts, we can get an improvement in the bound on the degrees (note that $\frac{n}{s+3}\geq \frac{n}{(\sqrt{s}+1)^2}$ for all $s\geq 1$).
%\footnote{LD: As a potentially interesting connection to our work, Bondy and Fan proved that if $G$ is a $k$-connected graph with minimum degree at least $(n-2k)/(k+1)$, then $G$ has a dominating cycle -- this solved a conjecture of Clark, Colbourne, and Erd\H{o}s.  The following result says that if $G$ has no $\alpha$-sparse cuts and most vertices have large minimum degree, then $G$ has a dominating cycle in which the vertices in $V(G)\setminus C$ are dominated by at most $s$ vertices of $C$.}

\begin{lemma} \label{lma:component}
Let $s,n\in \bbZ$ and let $ 1/n  \ll \alpha, \alpha' \ll \eta,\gamma,1/s$.  
If $G$ is an $(\eta, \alpha)$-robust graph on $n$ vertices such that $d(v) \ge ( \frac{1}{(\sqrt{s}+1)^2}  + \gamma ) n$ for all but at most $ \alpha' n $ vertices $v \in V(G)$, then $G$ has a spanning $t$-star-cycle with some $t \le s$. 
\end{lemma}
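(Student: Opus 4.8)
The plan is to use the regularity lemma together with the absorbing method, as flagged in the overview. First I would apply Szemerédi's regularity lemma (in its degree form) to $G$ with parameters chosen much smaller than $\alpha,\alpha'$ but much larger than $1/n_0$, obtaining a reduced graph $R$ on the cluster set whose vertices inherit, up to small error, the degree condition: all but a small fraction of clusters have degree at least $(\frac{1}{(\sqrt{s}+1)^2}+\gamma/2)|R|$ in $R$, and one checks that robustness of $G$ passes to a robustness-type property of $R$ (no sparse cut; in particular $R$ is connected). The target combinatorial structure in $R$ is a \emph{spanning star-matching} with at most $s$ stars: a collection of vertex-disjoint edges and stars covering almost all clusters, where each star has its center in one cluster and the leaves are matched to distinct clusters. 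The degree bound $\frac{1}{(\sqrt{s}+1)^2}=\frac{1}{s+2\sqrt{s}+1}$ is exactly what a fractional-matching argument (Section~\ref{sec:matching}) should give: a graph with this relative degree and no sparse cut has a fractional perfect matching that can be covered by at most $s$ "stars'' of fractional weight, essentially because a vertex cover / independent-set obstruction to a small star-matching would force an independent set of size more than $(\sqrt{s}+1)^2$ times the complement, contradicting the degree condition; the "no sparse cut'' hypothesis is what rules out the extremal configurations (disjoint cliques) and yields the improvement over $\frac{1}{s+3}$.

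The subtlety — and, as the overview warns, the real obstacle — is that a star-matching in $R$ does not by itself produce a $t$-star-cycle in $G$: regularity controls edges between clusters but not the high-degree centers of the stars, so a naive transfer might demand unbounded degree at some vertex of $G$. To get around this I would replace $R$ by the \emph{fractional-random-reduced-graph}: retain the fractional weights from the matching argument and, for each super-cluster playing the role of a star center, select a random sub-cluster/vertex witnessing the needed adjacencies, so that the structure found there genuinely lifts to bounded-degree stars in $G$. Concretely, a spanning star-matching in this refined object should translate, via the standard "blow-up / connecting within regular pairs'' machinery (connecting clusters along a spanning connected substructure into long paths, then closing up into one cycle), into a $t$-star-cycle with $t\le s$ covering all but $o(n)$ vertices of $G$, where the $o(n)$ exceptional vertices are those uncovered by the regularity partition plus a small reservoir.

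Finally I would absorb the leftover vertices. Before building the almost-spanning star-cycle, set aside an absorbing structure: by the absorbing lemma of DeBiasio and Nelsen~\cite{DN} (whose hypotheses are $(\eta,\alpha)$-robustness plus linear minimum degree, exactly what we have), $G$ contains a path or path-system that can absorb any small set of leftover vertices while remaining a path. Incorporate this absorber into the cycle $C$ when closing it up; then after the almost-spanning star-cycle is built, feed the at most $o(n)$ uncovered vertices into the absorber, enlarging $C$ to cover all of $V(G)$ without creating new branch vertices and without increasing the number of stars. The resulting spanning $t$-star-cycle with $t\le s$ is precisely the claimed structure, since its underlying graph is a spanning tree whose branch vertices are exactly the star centers (at most $s$ of them) together with at most two vertices on the cycle — and one arranges, by attaching the two "path ends'' of $C$ appropriately (e.g.\ routing through existing star centers or splitting one star), that the total number of branch vertices is still at most $s$. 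The main calculation to watch is the bookkeeping that the fractional-matching bound $\frac{1}{(\sqrt{s}+1)^2}+\gamma$ survives all the error terms introduced by regularity, the random refinement, and the reservoir, and still yields $t\le s$ rather than $t\le s+1$.
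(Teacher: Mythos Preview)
Your overall architecture matches the paper's: absorbing path from~\cite{DN}, regularity, a star-$2$-matching in a fractional-random-reduced graph, lift to an almost-spanning $t$-star-cycle containing the absorber, then absorb. But there is one genuine gap.

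The absorbing lemma from~\cite{DN} (stated here as Lemma~\ref{lemma:absorbing}) has \emph{two} cases, and you have only accounted for one. If $G$ is $\alpha^4$-near-bipartite, the absorber lives in a spanning bipartite subgraph $H=G[A,B]$ and can only absorb sets $W$ with $|W\cap A|=|W\cap B|$; it cannot, as you wrote, ``absorb any small set of leftover vertices''. So after building the almost-spanning $t$-star-cycle you must guarantee that the uncovered set is \emph{balanced} across $A$ and $B$. The paper does this with a separate, non-trivial argument (the ``Furthermore'' clause of Lemma~\ref{lma:spanning}): before running regularity one sets aside small random reservoirs $A'\subseteq A$, $B'\subseteq B$; after the star-cycle $C^*$ is built, one corrects the imbalance $q=|A_0|-|B_0|$ by deleting leaves from $L\cap B$ and, if that is insufficient, by attaching $q$ extra leaves from $A'$ to a suitable vertex $y$ of the cycle (either an existing star center in $B$, or a new center if $|K\cap A|<s$, or, when $|K\cap A|=s$, by exploiting that some center already has many leaves in $A$ and hence many neighbours in $A'$). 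This balancing step is essential for the near-bipartite case and is missing from your proposal; without it the absorption fails.

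Two smaller remarks. First, the improvement from $\frac{1}{s+3}$ to $\frac{1}{(\sqrt{s}+1)^2}$ is not obtained \emph{from} the no-sparse-cut hypothesis; it comes directly from the bipartite star-matching argument (Lemma~\ref{lma:starbipartite}), which uses only the degree bound. Robustness is used for connectivity of the reduced graph and for the absorbing lemma, not to rule out clique-type extremal configurations in the matching step. Second, your final paragraph is confused about the target: the lemma asks for a spanning $t$-star-cycle, full stop. There is no additional step to ``arrange the two path ends'' or worry about ``at most two vertices on the cycle'' becoming branch vertices; a $t$-star-cycle, once you delete one cycle edge, is already a spanning tree with exactly $t$ branch vertices.
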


We will prove Lemma~\ref{lma:component} in Section \ref{sec:cycle}, but first we deduce Theorem~\ref{thm:main} using Lemma~\ref{lma:component}. 
We need the following well-known result of P\'osa.
\begin{theorem}[P\'osa~\cite{P}] \label{Posa}
Let $G$ be a graph on $n$ vertices. 
If for every $1\leq k \le (n-1)/2$, at most $k-1$ vertices have degree at most $k$, then $G$ contains a Hamiltonian cycle. 
\end{theorem}

%\footnote{LD: Matsuda, Ozeki, and Yamashita conjectured that if $G$ is a \emph{claw-free} connected graph on $n$ vertices with $\delta(G)\geq (n-2)/(2s+3)$, then $G$ contains a spanning tree with at most $s$ branch vertices.  They proved this for $s=1$.} 

\begin{proof}[Proof of Theorem~$\ref{thm:main}$]
Let $\alpha^*$ be a constant such that $ 1/n  \ll \alpha^* \ll \eta,\gamma,1/s$.
Let $\alpha : = (4^r\alpha^*)^2$ and $\alpha' : = 4^{s+3} \alpha^*$, so we have $\alpha' \le 2^{-(s+1)}(s+3)^{-2}$.
By Lemma~\ref{lma:partition} with $r=s+3$, there exists a partition $\{ V_1, \dots, V_{k}\}$ of $V(G)$ with $k\leq s+2$ such that for each $j \in [k]$,
\begin{enumerate}[label={\rm (\roman*)}]
	\item $ | V_j | >  ( \frac{1}{s+3} + \frac{\gamma}{2} ) n$,
	\item $\delta (G[V_j]) \ge \delta(G) / 2^{s+1}$ and for all but at most $\alpha' n $ vertices $v \in V_j$, $d(v,V_j) \ge (\frac{1}{s+3} + \frac{\gamma}{2}) n $, and
	\item $G[V_j]$ has no $\alpha$-sparse cut.
\end{enumerate}

For each $j \in [k]$, let $G_j:= G[V_j]$ and $s_j := \floor{\frac{(s+3)|V_j|}{n+1}}$.
Note that by (i), each $s_j \ge 1$.  Furthermore, 
\begin{equation}
\sum_{j \in [k]} s_j =\sum_{j \in [k]} \floor{\frac{(s+3)|V_j|}{n+1}}\leq \floor{\sum_{j \in [k]}\frac{(s+3)|V_j|}{n+1}}= \floor{\frac{n(s+3)}{n+1}}\leq s+2. \label{eqn:sums}
\end{equation}
Consider any $j \in [k]$.
Note that by the definition of $s_j$, we have $\frac{|V_j|(s+3)}{n+1}<s_j+1$ and thus 
\[
d_{G_j} (v) \ge \left( \frac{1}{s+3} + \frac{\gamma}{2} \right) n\ge  \left( \frac{1}{s_j+1} + \frac{\gamma}{4} \right) |V_j|
\]
for all but at most $\alpha' n\leq  (s+3) \alpha' |V_j| $ vertices $v \in V_j$. 
Also
\begin{align*}
	\delta(G_j) \ge  \delta(G) / 2^{s+1} \ge  \frac{n}{2^{s+1}(s+3)} >  (s+3) \alpha' |V_j|.  
\end{align*}
If $s_j = 1$, then Theorem~\ref{Posa} implies that $G_j$ has a Hamiltonian cycle $H_j$.
If $s_j \ge 2$, then Lemma~\ref{lma:component} implies that $G_j$ contains a spanning $t_j$-star-cycle $H_j$ with $t_j \le \max\{1, s_j-2\}$.
(Note that if $s_j\in\{2, 3\}$, then $t_j\leq 1$.) 
Therefore, each $G_j$ contains a spanning $t_j$-star-cycle with 
\begin{align}
t_j \le s_j-1. \label{eqn:si} 
\end{align}

Since $G$ is connected, there exist edges $e_1, \dots, e_{k-1}$ in $G$ such that $\bigcup_{j \in [k]} H_j \cup \bigcup_{j \in [k-1]} e_j$ is connected.
Without loss of generality, we may assume that for all $i \in [k-1]$, $e_i \cap V(H_{i+1}) \ne \emptyset$ and $e_i \cap \bigcup_{j \in [i]} V(H_{j}) \ne \emptyset$.
We claim that for each $i \in [k-1]$, there exists a tree $T_i$ spanning $\bigcup_{j \in [i+1]} H_{j} \cup \bigcup_{j \in [i]} e_j $ with at most $i-1 + \sum_{j \in [i+1]} t_j$ branch vertices.
We will proceed by induction on~$i$. 
For $i =1$ and $j \in [2]$, let $e'_j$ be an edge in the cycle of $H_j$ such that $e'_j$ intersects~$e_1$ if possible. 
Then $T_1 : = (H_1 -e'_1) \cup (H_2-e'_2) \cup e_1$ is a tree with $t_1+t_2$ branch vertices. 
Hence we may assume that $i >1$ and the statement holds for $i' < i$. 
Let $T_{i-1}$ be a spanning tree of $\bigcup_{j \in [i]} H_{j} \cup \bigcup_{j \in [i-1]} e_j $ with at most $i-2 + \sum_{ j \in [i] } t_j$ branch vertices (which exists by the induction hypothesis). 
Let $T'_{i+1}$ be a spanning tree of $H_{i+1} \cup e_{i}$ with exactly $t_{i+1}$ branch vertices.
(To be precise, $T'_{i+1} := H_{i+1} \cup e_{i} - e'_{i+1}$, where $e'_{i+1}$ is an edge in the cycle of $H_{i+1}$ such that $e'_{i+1}$ intersects~$e_i$ if possible.)
Thus $T_{i} := T_{i-1} \cup T'_{i+1}$ is a spanning tree of $\bigcup_{j \in [i+1]} H_{j} \cup \bigcup_{j \in [i]} e_j $ with at most $i-1 + \sum_{j \in [i+1]} t_j$ branch vertices, so the claim holds.

\begin{figure}[ht]
\centering
\includegraphics[scale=1]{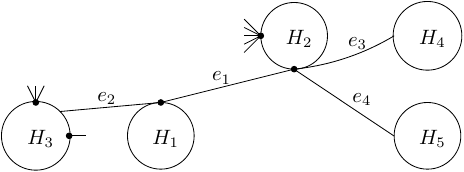}
\caption[]{Building the tree using the star-cycles $H_1, \dots, H_k$.  The branch vertices are highlighted in bold.}
\label{fig:partition}
\end{figure}

Let $T:= T_{k-1}$. Hence $T$ is a spanning tree of $G$ with at most 
\begin{align*}
k-2 + \sum_{j \in [k]} t_j = \sum_{j \in [k]} (t_j+1) -2 
\overset{\eqref{eqn:si}}{\le} \sum_{j \in [k]} s_j -2 
\overset{\eqref{eqn:sums}}{\le} s
\end{align*}
branch vertices, as desired.
\end{proof}

\section{Star-matchings}\label{sec:matching}

In this section we prove a preliminary result which will we will use together with the regularity lemma (see Lemma~\ref{lma:degreeform}) to prove Lemma~\ref{lma:component} in Section \ref{sec:cycle}.

For our purposes, we define a \emph{$2$-matching} to be a vertex-disjoint union of edges and odd cycles (sometimes this is referred to as a \emph{basic} 2-matching).
The \emph{order} of a $2$-matching $M$ is the number of vertices in $M$, and a \emph{maximum $2$-matching} is one of maximum order. 
Note that a $2$-matching~$M$ implies the existence of a (perfect) fractional matching on~$V(M)$. 
We need the following theorem of Pulleyblank which gives a Gallai-Edmonds-type (\cite{Ed},~\cite{Gal}) structural result for $2$-matchings.  Below we just state a simplified version of the result which suffices for our purposes, so the reader should see~\cite[Theorem 4]{Pul} for the complete statement.

\begin{theorem}[Pulleyblank~\cite{Pul}]\label{thm:2GalEd}
Let $G$ be a graph, let $A$ be the set of vertices which are not covered in some maximum matching, and let $A_1$ be the set of isolated vertices in $G[A]$.  If $M$ is a maximum $2$-matching for which the number of vertices contained in odd cycles is minimized, then $V(G) \setminus V(M) \subseteq A_1$ and the edges of $M$ incident with vertices in $A_1$ induce a matching which covers~$N(A_1)$.
\end{theorem}

For $t \in \mathbb{Z}^+$, a \emph{$t$-star-matching} is a vertex-disjoint collection of edges and exactly $t$ (non-trivial) stars and and a \emph{$t$-star-$2$-matching} is a vertex-disjoint collection of edges, odd cycles, and exactly $t$ (non-trivial) stars.  

\begin{lemma} \label{lma:starbipartite}
Let $n, s\in \bbZ$ and let $G$ be a bipartite graph on $n$ vertices with partition $\{A,B\}$.
If $d(a) \ge \frac{n}{(\sqrt{s}+1)^2}$ for all $a \in A$ and there exists a matching covering~$B$, then $G$ contains a spanning $t$-star-matching for some $t \le s$.
\end{lemma}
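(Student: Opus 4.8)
The plan is to build the $t$-star-matching greedily: start from a matching $M$ that saturates $B$ and is, among all such matchings, chosen to be maximum (so it also saturates as much of $A$ as possible). If $M$ already saturates all of $A$ we are done with $t=0$, so assume some vertices of $A$ are unsaturated. The idea is then to repeatedly attach an unsaturated vertex of $A$ as an extra leaf to one of the stars we are building (or convert an edge of $M$ into a star of size $2$), always doing so in a way that keeps the structure a union of edges and stars and keeps $B$ saturated. The key quantitative question is: how many times must we do this? Each time we add a leaf we "use up" one more unsaturated $A$-vertex, and the degree condition $d(a)\ge n/(\sqrt s+1)^2$ for $a\in A$ will force that the remaining unsaturated $A$-vertices cannot be too numerous relative to the sizes of the stars already formed.

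More precisely, I would argue as follows. Let $M$ be a matching saturating $B$ with $|M|$ maximum; write $A_0$ for the set of $M$-unsaturated vertices of $A$, so $|A_0| = |A|-|B|$. Since $M$ is a maximum matching among those saturating $B$, there is no $M$-augmenting path with both endpoints in $A_0$ that avoids un-saturating $B$ — but in a bipartite graph any augmenting path from $A_0$ alternates and, since $B$ is already saturated, any alternating path from $a\in A_0$ stays inside $V(M)\cup A_0$; maximality of $|M|$ means no such path reaches another vertex of $A_0$. Hence, letting $R\subseteq B$ be the set of $B$-vertices reachable from $A_0$ by $M$-alternating paths, the $M$-partners of $R$ all lie in $A\setminus A_0$, and $N(A_0)\subseteq R$, while every vertex of $R$ is reachable, so in fact $N(A_0) = R$ and $|R|=|N(A_0)|$. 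Now each $a\in A_0$ has $d(a)\ge n/(\sqrt s+1)^2$, and $N(A_0)=R$, so $|R|\ge n/(\sqrt s+1)^2$.

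To finish, I want to form stars centered on a suitable subset of the $M$-partners of $R$. Pick a maximal collection of vertex-disjoint $M$-alternating paths from $A_0$ into $R$; using these one can re-route $M$ on $R\cup M(R)\cup A_0$ so that the resulting structure on these vertices is a union of stars, each centered at an old $M$-partner in $A\setminus A_0$, with leaves in $R$ and additional leaves taken from $A_0$ — and this keeps $R$ (hence $B$) covered. The number of stars needed is at most the number of centers, which is $|M(R)| = |R| = |N(A_0)|$; meanwhile the total number of leaves we must accommodate is $|R| + |A_0|$, and since each star has at least $2$ leaves we actually need at most roughly $(|R|+|A_0|)/2$ stars, but more usefully: a counting of vertices of $A$ versus $B$ in this sub-structure, combined with $|R|\ge n/(\sqrt s+1)^2 \ge |A_0|\cdot(\text{something})$, pins $t$ down. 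Writing $p=|A_0|$ and $q=|R|\ge n/(\sqrt s+1)^2$, and using $p+q\le n$, one gets $p \le n - q \le n - n/(\sqrt s+1)^2 = n\cdot\frac{s+2\sqrt s}{(\sqrt s+1)^2}$, and the number of stars is controlled by $p$ together with how efficiently leaves pack into each star of size $\ge q/\,(\text{number of stars})$; optimizing gives $t\le s$ exactly because of the algebraic identity $\big(\sqrt s\cdot\frac{1}{\sqrt s+1}\big)\big(\sqrt s+1\big)=s$.

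The main obstacle I expect is the last step: turning the alternating-path structure on $R\cup M(R)\cup A_0$ into a genuine vertex-disjoint family of edges and exactly $t$ stars while simultaneously (a) keeping $B$ saturated, (b) keeping everything spanning the relevant vertex set, and (c) getting the sharp bound $t\le s$ rather than something weaker like $t=|A_0|$. The sharp constant is clearly engineered so that the worst case is when $A_0$ is as large as possible given the degree bound, and the stars must then be made as "fat" as possible; making the greedy re-routing actually achieve this balance — so that no star is wasted with too few leaves — is where the real work lies, and I would handle it by choosing the alternating paths / the assignment of $A_0$-vertices to centers to equalize star sizes as much as possible, then verifying the arithmetic $\lceil p / (q/t - 1)\rceil \le t$ closes up at $q = n/(\sqrt s+1)^2$, $p = n - q$.
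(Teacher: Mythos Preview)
Your proposal has a structural error that makes the approach unworkable as written. You place the star centers in $A\setminus A_0$ (the $M$-partners of $R$) and claim the leaves can lie in $R$ \emph{and} in $A_0$. But $G$ is bipartite with parts $A,B$: a star centered at a vertex of $A$ can only have leaves in $B$. There is no edge from a center in $A\setminus A_0$ to a vertex of $A_0$, so your stars cannot absorb the surplus vertices in $A_0$ at all. The stars have to be centered in $B$---that is the only way a single star can soak up many $A$-vertices.

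Even after correcting the side of the bipartition, the counting you sketch does not yield $t\le s$. The quantities you produce as candidate bounds on the number of stars, namely $|R|=|N(A_0)|$ or $(|R|+|A_0|)/2$, are linear in $n$, not bounded by~$s$; and the ``algebraic identity'' you invoke, $\bigl(\sqrt{s}\cdot\tfrac{1}{\sqrt{s}+1}\bigr)(\sqrt{s}+1)$, equals $\sqrt{s}$, not $s$. What is missing is any mechanism for finding \emph{few} vertices of $B$ whose neighborhoods jointly cover the unmatched part of $A$. The paper does this by an iterative averaging argument: whenever a set $U\subseteq A$ violates Hall's condition, the degree hypothesis forces some $b\in N(U)$ with $d(b,U)>n/(\sqrt{s}+1)^2$; one then shows, via a short but genuine calculation, that the first such $b_1$ already has $d(b_1)\ge |A|-\tfrac{sn}{(\sqrt{s}+1)^2}$, and each subsequent $b_i$ contributes at least $n/(\sqrt{s}+1)^2$ fresh $A$-vertices to the union of neighborhoods. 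After $s$ rounds the uncovered remainder of $A$ has size below $n/(\sqrt{s}+1)^2$ and hence satisfies Hall's condition on its own. This iterative choice of high-degree centers in $B$, together with the specific inequality for $|N(b_1)|$, is the real content of the bound, and nothing in your alternating-path picture supplies it.
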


\begin{proof}
Let $M$ be a matching covering~$B$.  
We begin with two claims.
\begin{claim}\label{claim:M'}
Suppose there exists $B'\subseteq B$ with $|B'|\leq s$ and a matching $M'$ covering $A\setminus N(B')$. 
Then $G$ contains a spanning $t$-star-matching for some $t\leq s$.
\end{claim}

\begin{proofclaim}
Since $M$ and $M'$ cover~$B$ and $A\setminus N(B')$ respectively, there exists a matching $M^*$ which covers $B \cup (A\setminus N(B')) = V(G) \setminus N(B')$\footnote{See~\cite[Exercise 3.1.13]{West}}.
Finally, since $V(G) \setminus V(M^*) \subseteq N(B')$, there exists a spanning $t$-star-matching for some $t \leq |B'| \leq s$.  
\end{proofclaim}

\begin{claim}\label{claim:Hall}
If there exists $U\subseteq A$ such that $|N(U)|<|U|$, then $|U|>\frac{n}{(\sqrt{s}+1)^2}$ and there exists $b\in N(U)$ such that $|d(b, U)|\geq \frac{|U| n}{ (\sqrt{s}+1)^2 |N(U)| }>\frac{n}{(\sqrt{s}+1)^2}$.
\end{claim}

\begin{proofclaim}
Let $U\subseteq A$ such that $|N(U)|<|U|$.  Since $d(a) \ge \frac{n}{(\sqrt{s}+1)^2}$ for all $a \in A$, we clearly have $|U|>\frac{n}{(\sqrt{s}+1)^2}$.  Furthermore, by averaging, there exists a vertex $b\in N(U)$ such that  
$$d(b, U)\geq \frac{e(U, N(U))}{|N(U)|}\geq \frac{|U|\frac{n}{(\sqrt{s}+1)^2}}{|N(U)|}>\frac{n}{(\sqrt{s}+1)^2}.$$
\end{proofclaim}

Recall that $M$ covers $B$, so we may assume that $M$ does not cover $A$ or else we are done. 
Thus we have $|A| > |B|$.
By Claim~\ref{claim:Hall}, there exists $b_1\in B$ with $A_1:=N(b_1)$ such that 
\begin{align}\label{eq:A1}
|A_1|=|d(b_1, A)|\geq \frac{|A|\frac{n}{(\sqrt{s}+1)^2}}{|B|}\geq |A|-\frac{sn}{(\sqrt{s}+1)^2}.  
\end{align}
To see this last inequality, set $|A|=\alpha n$ (and so $|B|=(1-\alpha)n$), and then divide both sides by $n$.  Now it is straightforward to verify that $\frac{\alpha}{1-\alpha}\frac{1}{(\sqrt{s}+1)^2}\geq \alpha-\frac{s}{(\sqrt{s}+1)^2}$ holds for all $1/2\leq \alpha<1$ and $s\geq 1$.

Now suppose that for some $r \in [s-1]$ we have chosen vertices $b_1, \dots, b_{r}$ and pairwise disjoint sets $A_1, \dots, A_{r}$ such that $A_i = N(b_i)\setminus \bigcup_{j \in [i-1]} A_j$ for all $i \in [r]$, $|A_1|$ satisfies~\eqref{eq:A1}, and $|A_i|>\frac{n}{(\sqrt{s}+1)^2}$ for all $2\leq i\leq r$.  
If there exists a matching covering $A\setminus \bigcup_{i\in[r]}A_i$, then we are done by  Claim~\ref{claim:M'}  (with $B':=\{b_1, \dots, b_r\}$).
Otherwise, by Hall's theorem and Claim~\ref{claim:Hall}, there is a vertex $b_{r+1}$ with $A_{r+1}= N(b_{r+1})\setminus \bigcup_{i \in[r]}A_i$ such that $|A_{r+1}|> \frac{n}{(\sqrt{s}+1)^2}$. 
Thus we obtain vertices $b_1, \dots, b_{s}$ and pairwise disjoint sets $A_1, \dots, A_{s}$ such that $A_i= N(b_i)\setminus \bigcup_{j \in[i-1]}A_j$ for all $i \in [s]$, $|A_1|$ satisfies \eqref{eq:A1}, and $|A_i|>\frac{n}{(\sqrt{s}+1)^2}$ for all $2\leq i\leq s$.  Note that 
\begin{align*}
|A\setminus \bigcup_{i \in [s]}A_i| < |A|-\left(|A|-\frac{sn}{(\sqrt{s}+1)^2}\right)-\frac{(s-1)n}{(\sqrt{s}+1)^2}=\frac{n}{(\sqrt{s}+1)^2}.
\end{align*}
Since $d(a) \ge \frac{n}{(\sqrt{s}+1)^2}$ for all $a \in A$, there is a matching $M'$ saturating $A\setminus \bigcup_{i \in[s]}A_i$ and thus we are done by Claim~\ref{claim:M'} (with $B':=\{b_1, \dots, b_s\}$).
\end{proof}

We now combine Theorem~\ref{thm:2GalEd} and Lemma~\ref{lma:starbipartite} to prove the following result on spanning $t$-star-$2$-matchings in general graphs.  

\begin{lemma}\label{lma:star2match}
Let $n, s\in \bbZ$ and $0 <\alpha'\leq \eta, \gamma$.  If $G$ is a graph on $n$ vertices with $\delta(G)\geq \eta n$, and $d(v)\geq (\frac{1}{(\sqrt{s}+1)^2}+\gamma)n$ for all but at most $\alpha'n$ vertices $v\in V(G)$, then $G$ has a spanning $t$-star-$2$-matching with $t \le s$.
Moreover, if $G$ is bipartite, then $G$ has a spanning $t$-star-matching with $t \le s$. 
\end{lemma}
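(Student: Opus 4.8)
The plan is to apply Theorem~\ref{thm:2GalEd} to $G$ to obtain a maximum $2$-matching $M$ for which the number of vertices in odd cycles is minimized, together with the sets $A$ (vertices unsaturated in some maximum matching) and $A_1$ (singletons in $G[A]$). If $M$ is spanning we are immediately done with $t=0$, so assume not; then the set $W$ of $M$-unsaturated vertices is nonempty, and by the theorem $W \subseteq A_1$, while the edges of $M$ incident to $A_1$ form a matching saturating $N(A_1)$. The idea is to ``repair'' $M$ by absorbing the vertices of $W$ into stars centered at vertices of $N(A_1)$.

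First I would set up the bipartite structure: let $B := N(A_1)$ and let $A' := A_1$ (or more precisely $A_1$ together with whatever singletons are relevant), and consider the bipartite graph $H$ between $A_1$ and $B$ with edges inherited from $G$. The key point is that the matching given by Theorem~\ref{thm:2GalEd} saturates $B = N(A_1)$, so the hypothesis ``there exists a matching which saturates $B$'' of Lemma~\ref{lma:starbipartite} is met. To invoke Lemma~\ref{lma:starbipartite} I need the degree condition $d_H(a) \ge \frac{|V(H)|}{(\sqrt{s}+1)^2}$ for all $a \in A_1$; here I would use the global degree condition on $G$, noting that $A_1$ is an independent set in $G$ (it consists of singletons of $G[A]$), so every neighbor in $G$ of a vertex $a \in A_1$ lies in $\overline{A_1} \supseteq$ its neighbors, and after discarding the at most $\alpha' n$ low-degree vertices the bound $d(v) \ge (\frac{1}{(\sqrt{s}+1)^2}+\gamma)n$ gives enough room relative to $|V(H)| \le n$. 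Some care is needed because a few vertices of $A_1$ might be among the exceptional low-degree vertices; I expect to handle these separately, e.g.\ by first removing $O(\alpha' n)$ bad vertices and arguing that they can be attached to the $2$-matching structure as additional leaves or by adjusting $\eta$ — this is where the minimum degree hypothesis $\delta(G) \ge \eta n$ (rather than just a bound for most vertices) is used, since every vertex, including bad ones, has a neighbor to attach to. Applying Lemma~\ref{lma:starbipartite} then yields a spanning $t$-star-matching of $H$ with $t \le s$, i.e.\ a set of at most $s$ stars centered in $B$ with leaves in $A_1$, together with a perfect matching on the rest of $A_1 \cup B$.

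Next I would splice this back into $M$: the vertices of $A_1$ and $N(A_1)$ carry a star-matching structure from $H$, while every vertex of $G$ outside $A_1 \cup N(A_1)$ is saturated by an edge or odd cycle of $M$ that does not touch $A_1$ — here I would use the precise conclusion of Theorem~\ref{thm:2GalEd} that $M$ restricted away from $A_1$ is still a valid $2$-matching on $V(G) \setminus (A_1 \cup N(A_1))$, or more carefully, that removing the $M$-edges at $N(A_1)$ and replacing them by the star-matching of $H$ leaves the odd cycles and remaining edges intact. The result is a spanning collection of edges, odd cycles, and at most $s$ stars, i.e.\ a spanning $t$-star-$2$-matching with $t \le s$. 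For the bipartite ``moreover'' statement, $G$ itself has no odd cycles, so $M$ is an ordinary maximum matching, the whole argument goes through with no odd cycle components, and Lemma~\ref{lma:starbipartite} directly gives a spanning $t$-star-matching.

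The main obstacle I anticipate is the bookkeeping around the exceptional vertices and the exact boundary between ``inside $A_1 \cup N(A_1)$'' and ``outside'': one must verify that (i) the low-degree vertices, which may or may not lie in $A_1$, do not spoil the degree hypothesis of Lemma~\ref{lma:starbipartite} when restricted to the bipartite graph $H$ on possibly as many as $n$ vertices, and (ii) the substitution of the star-matching of $H$ for the $M$-edges at $N(A_1)$ genuinely produces vertex-disjoint components covering everything. I expect (i) to be the genuinely delicate point — the ratio $\frac{1}{(\sqrt{s}+1)^2}$ is exactly what the algebra in Lemma~\ref{lma:starbipartite} needs, so there is essentially no slack, and the $\gamma n$ and $\delta(G) \ge \eta n$ terms must be deployed carefully to absorb both the $\alpha' n$ exceptional vertices and any discrepancy between $|V(H)|$ and $n$.
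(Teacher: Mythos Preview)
Your proposal is correct and follows essentially the same route as the paper: apply Theorem~\ref{thm:2GalEd}, pass to the bipartite graph $H$ on $A_1 \cup N(A_1)$ where the $M$-edges give a matching saturating $B=N(A_1)$, invoke Lemma~\ref{lma:starbipartite}, and splice the resulting star-matching back into $M\setminus \hat{M}$. The one place where the paper is more specific than your sketch is the handling of the at most $\alpha' n$ low-degree vertices in $A_1$: rather than attaching them as extra leaves (which could inflate the star count beyond $s$), the paper first greedily matches this small set $A_1'$ into $B$ using $\delta(G)\ge\eta n\ge\alpha' n$, deletes both endpoints, and only then applies Lemma~\ref{lma:starbipartite} to the remaining bipartite graph $H^*$, where every surviving $a\in A_1^*$ now has $d(a,B^*)\ge(\tfrac{1}{(\sqrt{s}+1)^2}+\gamma)n-\alpha' n\ge \tfrac{n}{(\sqrt{s}+1)^2}\ge \tfrac{|V(H^*)|}{(\sqrt{s}+1)^2}$.
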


\begin{proof}
Let $A$ be the set of vertices in $G$ which are not covered in some maximum matching in~$G$ and let $A_1$ be the set of isolated vertices in $G[A]$.  
Let $M$ be a maximum $2$-matching in $G$ with the minimum number of vertices in odd cycles.  Let $B:=N(A_1)$ and let $H$ be the bipartite graph induced by $(A_1, B)$.  By Theorem~\ref{thm:2GalEd}, we have that $A_1$ is an independent set and the edges of $M$ in $H$, call them $\hat{M}$, induce a matching covering $B$.  
Let $A_1'$ be the set of at most $\alpha'n$ vertices $v\in A_1$ for which $d(v, B)< (\frac{1}{(\sqrt{s}+1)^2}+\gamma)n$, but $d(v, B)\geq \eta n$.  
Now by the size of $A_1'$, there exists a matching $M'$ covering~$A_1'$, which we will choose to have as many edges from $\hat{M}$ as possible.
Let $A_1^*=A_1\setminus V(M')$, $B^*=B\setminus V(M')$. 
Let $H^*= H[A_1^* \cup B^*]$ be the bipartite graph obtained from~$H$ by deleting vertices of~$M'$.
Note that $\hat{M}\setminus M'$ covers~$B^*$ and for all $v\in A_1^*$, 
\[
d(v, B^*)\geq \left(\frac{1}{(\sqrt{s}+1)^2}+\gamma \right)n-\alpha'n
\ge \frac{n}{(\sqrt{s}+1)^2} \ge \frac{|H^*|}{(\sqrt{s}+1)^2} .  
\]
Thus by Lemma~\ref{lma:starbipartite}, there is a spanning $t$-star-matching $M^*$ in $H^*$ with $t\le s$.   Now $(M\setminus \hat{M})  \cup M' \cup M^* $ gives us the desired $t$-star-$2$-matching of $G$.

If $G$ is bipartite, then since $G$ has no odd cycles, a $t$-star-$2$-matching is a $t$-star-matching.
\end{proof}

We note that any improvement in the bound on $d(a)$ for all $a \in A$ in Lemma~\ref{lma:starbipartite} would immediately improve the bounds in Lemma~\ref{lma:star2match}, Lemma~\ref{lma:spanning}, and consequently Lemma~\ref{lma:component}.

\begin{problem}
Determine the smallest value of $m$ so that the outcome of Lemma \ref{lma:starbipartite} holds with $d(a) \ge m$ for all $a \in A$.  It is at least $\frac{n}{2s+2}$ as witnessed by $s+1$ disjoint, nearly balanced copies of complete bipartite graphs on approximately $\frac{n}{s+1}$ vertices each.
\end{problem}

\section{Regularity lemma}\label{sec:regularity}

Let $G$ be a bipartite graph with bipartition~$\{A,B\}$. For non-empty sets $X\subseteq A$, $Y\subseteq B$, we define the \emph{density of $G[X,Y]$} to be $d_G(X, Y):=e_G(X,Y)/|X||Y|$.
Let $\ep>0$. We say that $G$ is \emph{$\ep$-regular} if for all sets $X \subseteq A$ and $Y \subseteq B$ with $|X|\geq \ep |A|$ and $|Y| \geq \ep |B|$ we have 
\begin{align*}
|d_G(A,B) - d_G(X,Y)| < \ep.
\end{align*}
The following simple results follow immediately from this definition.

\begin{proposition}\label{prop:typical}
Let $(A,B)$ be an $\eps$-regular pair with density $d$.
Then, for all $A'\subseteq A$ with $|A'|\geq \ep|A|$, all but at most  $2\ep|B|$ vertices in $B$ have $(d \pm \ep)|A'|$ neighbors in~$A'$.
\end{proposition}

\begin{proposition}\label{prop:slice}
Let $(A,B)$ be an $\eps$-regular pair with density $d$ and let $c > \eps$.
Let $A' \subseteq A$ and $B' \subseteq B$ with  $|A'| \ge c |A|$ and $|B'| \ge c |B|$.
Then $(A', B')$ is a $2\eps/c$-regular with density at least $d - \eps$.
\end{proposition}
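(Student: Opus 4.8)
The plan is to deduce both conclusions directly from the $\eps$-regularity of $(A,B)$ via the triangle inequality, after checking that all the subsets involved are large enough to be admissible test sets for the original pair. For the density statement, I would note that $|A'|\ge c|A| \ge \eps|A|$ and $|B'|\ge c|B|\ge\eps|B|$ (here I use $c>\eps$), so $\eps$-regularity of $(A,B)$ immediately gives $|d_G(A',B') - d| < \eps$, whence $d_G(A',B') \ge d-\eps$.

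For the regularity statement, I would take arbitrary $X\subseteq A'$ and $Y\subseteq B'$ with $|X|\ge (2\eps/c)|A'|$ and $|Y|\ge (2\eps/c)|B'|$, and observe that
\[
|X| \ge \frac{2\eps}{c}|A'| \ge \frac{2\eps}{c}\cdot c|A| = 2\eps|A| > \eps|A|,
\]
and similarly $|Y| > \eps|B|$. Hence $(X,Y)$ is an admissible pair for $\eps$-regularity of $(A,B)$, giving $|d_G(X,Y) - d| < \eps$; combined with $|d_G(A',B') - d| < \eps$ from the previous paragraph, the triangle inequality yields $|d_G(X,Y) - d_G(A',B')| < 2\eps \le 2\eps/c$, where the last step uses $c \le 1$ (which holds since $|A'|\le|A|$ forces $c\le 1$ for the hypothesis to be non-vacuous). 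This is exactly the definition of $(A',B')$ being $2\eps/c$-regular.

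There is really no serious obstacle here; the only points requiring a moment's care are (i) verifying that the scaled-down threshold $2\eps/c$ on $A',B'$ still translates into a threshold at least $\eps$ on $A,B$, which is where the factor $2$ (rather than $1$) is spent, and (ii) the harmless observation that $c\le 1$ so that $2\eps \le 2\eps/c$. If $2\eps/c \ge 1$ the regularity condition is vacuous and there is nothing to check, so that degenerate range can be dismissed in one line.
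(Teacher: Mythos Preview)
Your argument is correct and is exactly the standard derivation that the paper has in mind; the paper does not spell out a proof of this proposition at all, merely remarking that it ``follow[s] immediately from this definition,'' and your write-up supplies precisely those immediate details.
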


Let $\mathcal{Q}$ be a partition of a set~$V$. 
For a subset $U \subseteq V$, define $\mathcal{Q} \setminus U : = \{ W \setminus U \colon W \in \mathcal{Q}\}$.
We say that a partition $\mathcal{Q}'$ is a \emph{refinement of $\mathcal{Q}$} if, for all $W' \in \mathcal{Q}'$, $W' \subseteq W$ for some $W \in \mathcal{Q}$.

Let $G$ be a graph on $V$. 
We say that $\mathcal{Q}$ is an $(\eps,d,m,k)$-regular partition of~$G$, if 
\begin{enumerate}[label={(Q\arabic*)}]
	\item $\mathcal{Q}= \{V_0, V_1,  \dots, V_k\}$ is a partition of $V$; 
	\item \label{itm:Q1} $|V_0| \le \eps n$;
	\item \label{itm:Q2} $|V_1| = \dots = |V_k| = m$;
	\item \label{itm:Q3} for all distinct $i,j \in [k]$, the graph $G[V_i, V_j]$ is $\eps$-regular and has density either $0$ or $>d$;
	\item \label{itm:Q4} for all $i \in [k]$, $G[V_i]$ is empty.
\end{enumerate}

We use the degree form of the regularity lemma.

\begin{lemma}[Degree form of the regularity lemma]\label{lma:degreeform}
For all $0<\eps<1$, there exists $N = N(\eps)$ such that the following holds for every $0\leq d < 1$.  For every graph $G$ on $n \ge N$ vertices with partition $\mathcal{Q'}$ of $V(G)$ into at most $\eps^{-1}$ parts, there exists a spanning subgraph~$G'$ of~$G$ and an $(\eps, d,m,k)$-regular partition $\mathcal{Q}= \{V_0, V_1,  \dots, V_k\}$ of $G'$ satisfying the following:
\begin{enumerate}
	\item $\eps^{-1}\leq k \leq N$;
	\item $\{V_1, \dots, V_k\}$ is a refinement of $\mathcal{Q}' \setminus V_0$;
	\item $\Delta ( G - G') \le  (d + \eps)n$.
\end{enumerate}
\end{lemma}

Lemma~\ref{lma:degreeform} can be derived from the original Szemer\'edi's regularity lemma~\cite{Sz}.
\footnote{See \cite[{Lemma~7.3}]{KOsurvey} for a sketch of the proof.}

We now define the reduced graph.

\begin{definition}[Reduced graph] \label{def:R}
Let $m,k \in \mathbb{Z}^+$ and $\eps, d>0$.
Let $G$ be a graph and $\mathcal{Q} = \{V_0, V_1, \dots, V_k\}$ an $(\eps, d,m,k)$-regular partition of~$G$. 
We define the \emph{$(\ep, d)$-reduced graph~$R$ of $G$} as follows. 
The vertex set of $R$ is the set of clusters $\{V_i : i \in [k]\}$.
For each $U,U' \in V(R)$, $U U'$ is an edge of $R$ if the subgraph $G[U,U']$ is $\eps$-regular and has density greater than $d$.
\end{definition}

Note that the $(\ep,d)$-reduced graph~$R$ depends on~$\mathcal{Q}$, which will always be known from the context. 
If $|V(G)| = n$ and $\delta(G) \ge \eta n$, then $\delta(R) \ge ( \eta - \eps ) k $ (see Proposition~\ref{prop:mindeg}).
As discussed in Section~\ref{sec:sketch}, there is no relationship between $\Delta(G)$ and $\Delta(R)$.
For our purpose, we would like that, if $d_{R} (V_i) = d_i k$, then the majority of the vertices $v \in V_i$ satisfies $d_{G} (v) \approx d_i n$.
We can achieve this by considering a graph $R^*$ obtained from the reduced graph $R$ by essentially replacing each edge $V_iV_j$ in $R$ of density $d_{i,j}$ with a random bipartite graph of density $d_{i,j}$.  Formally, 
we introduce the following notion.

\begin{definition}[Fractional-random-reduced graph] \label{def:R*}
Let $\ell, s \in \mathbb{Z}^+$ and $\eps, \eps' , d>0$.
Let $G$ be a graph and $\mathcal{Q} = \{V_0, V_1, \dots, V_k\}$ an $(\eps, d,m,k)$-regular partition of~$G$. 
For all distinct $i,j \in [k]$, let $d_{i,j}=d_{G}(V_i, V_j)$.
We say that a graph $R^*$ is an \emph{$(\ep', \ell,s)$-fractional-random-reduced graph of $G$} if 
\begin{enumerate}[label = {\rm (R\arabic*)}]
	\item \label{itm:R1}
	$V(R^*) = \bigcup_{i \in [k]} X_i$, where $X_i = \{ x_{i,j} : j \in [ \ell ]\}$ is a set of $\ell$ vertices;
	\item \label{itm:R2}
	for any distinct $i,j \in [k]$, if $d_{i,j} = 0 $, then $ R^* [ X_i , X_j ]$ is empty;
    \item \label{itm:R4} for any $i \in [k]$, any $s' \le s$ vertices $y_1, \dots, y_{s'}$ with each $y_p \in X_{i_p}$ where $i_p\neq i$, 
\begin{align*}	
	\left| X_i \cap \bigcup_{p \in [s']} N_{R^*} (y_p)  \right| 
	= 	\left| X_i \setminus \bigcap_{p \in [s']} \overline{N_{R^*} (y_p)}  \right| 
	=  (1 \pm \eps') \left( 1  - \prod_{p \in [s']} (1-d_{ i_p ,i})  \right) \ell.
\end{align*}	
\end{enumerate}
\end{definition}

Again, $R^*$ depends on $\mathcal{Q}$, which will be known from context.  
Note that if $0 < 1/\ell \ll \ep', d , 1/s$, then such an $R^*$ exists, by taking $R^*[X_i,X_j]$ to be a binomial random balanced bipartite graph on $2\ell$ vertices with probability $d_{i,j} $ for each distinct $i,j \in [k]$ (cf.~\cite[Definition 1.9]{KS}).

The key property of $R^*$ is that if $R^*$ contains vertex-disjoint stars covering some proportion of $V(R^*)$, then we can find vertex-disjoint stars in $G$ covering approximately the same proportion of $V(G)$ (see Lemma~\ref{lemma:star}).

%\subsection{Properties of reduced graphs and fractional-random-reduced graphs}

The following simple propositions relate the minimum degrees of $G$, $R$, and $R^*$.

\begin{proposition}\label{prop:mindeg}
Let $n, \ell,s \in \mathbb{Z}^+$ and $ \eps, \eps', \delta, \eta, \alpha' >0$ with $\eps \le 1/2$.
Let $G$ be a graph on $n$ vertices with $\delta(G) \ge \eta n$. 
Let $U$ be the set of vertices $v \in V(G)$ such that $d(v) < \delta n $ and let $\mathcal{Q}' := \{ U, \overline{U}\}$.
Suppose that $ |U| \le \alpha' n $.
Let $\mathcal{Q} = \{V_0, V_1, \dots, V_k\}$ be an $(\eps, d,m,k)$-regular partition of~$G$ such that $\{V_1, \dots, V_k\}$ is a refinement of $\mathcal{Q}' \setminus V_0$.
Let $R$ and $R^*$ be the $(\ep,d)$-reduced graph and an $(\ep', \ell,s)$-fractional-random-reduced graph of $G$, respectively. 
Then 
\begin{enumerate}[label={\rm (\roman*)}]
	\item $\delta(R) \ge (\eta - \eps ) k$;
	\item $\delta (R^*) \ge ( \eta - \eps-\eps' )k\ell$;
	\item $d_{R^*}(x) \geq ( \delta - \eps -\eps' )k\ell$ for all but at most $2\alpha'k \ell$ vertices $x \in V(R^*)$.
\end{enumerate}
\end{proposition}

\begin{proof}
For all distinct $i,i' \in [k]$, let $d_{i,i'}=d_{G}(V_i, V_{i'})$.
Note that $k \le n/m \le 2k$.
For each $i \in [k]$, 
\begin{align}
	\sum_{i'\in [k]\setminus \{i\}} d_{i,i'}
	&\geq \frac{1}{m^2} \sum_{v\in V_i} ( d_{G}(v) - |V_0|)\notag\\
	%\geq \frac{1}{m^2} \sum_{v\in V_i} \left( d_{G}(v) - \eps n \right)
	&\geq \frac{k }{m}  \sum_{v\in V_i} \left( \frac{d_{G}(v)}n - \eps \right)
	\geq \frac{k }{m}  \sum_{v\in V_i} \left( \eta - \eps \right)
	= (\eta-\ep)k
	\label{eqn:dij}
\end{align}
implying~(i) as $d_R(V_i)  \ge \sum_{i'\in [k]\setminus \{i\}} d_{i,i'} $ and $\delta(G) \ge \eta n $.
Consider any vertex $x_{i,j}\in V(R^*)$. 
By \ref{itm:R4} (with $s'=1$) and~\eqref{eqn:dij}, we have 
\begin{align*}
	d_{R^*}( x_{i,j} ) 
	&= \sum_{i' \in [k] \setminus i} 	d_{R^*}( x_{i,j} , V_{i'})\\ 
	&\geq \sum_{i' \in [k] \setminus i} (1-\ep') d_{i,i'} \ell
	\ge (1-\ep')(\eta-\ep)k\ell
	\ge ( \eta - \eps-\eps' )k\ell.
\end{align*}
Hence $\delta (R^*) \ge ( \eta - \eps-\eps' )k\ell$ implying (ii). 
Recall that $\{V_1, \dots, V_k\}$ is an refinement of $\mathcal{Q}' \setminus V_0$. 
Thus for all but at most $\ell |U| / m  \le 2 \alpha' k \ell $ vertices $x \in V(R^*)$ satisfies $d_{R^*}(x) \geq ( \delta - \eps-\eps' )k\ell$.
\end{proof}

We need the following lemmas which give some desirable properties of $(\ep, d)$-reduced graph~$R$.
First, we show that if $G$ has no sparse cuts, then $R$ is connected.

\begin{lemma}\label{lma:connectedreduced}
Let $n,k,m \in \mathbb{Z}^+$ and $\ep, d, \alpha>0$ be such that $2 (d+2\eps) \le  \alpha $. 
Let $G$ be a graph on $n$ vertices with no $\alpha$-sparse cuts.
Let $G'$ be a spanning subgraph of $G$ with $\Delta(G - G') \le (d + \eps) n$.
Let $\mathcal{Q} = \{V_0, V_1, \dots, V_k\}$ be an $(\eps, d,m,k)$-regular partition of~$G'$.
Then the $(\ep, d)$-reduced graph~$R$ of~$G'$ is connected.
\end{lemma}

\begin{proof}
Suppose that $R$ is not connected.
Let $\cA$ be a component of $R$ with $|\cA| \le k/2$ and $A=\bigcup_{V_i\in \cA}V_i$.
Note that $|\overline{A}| \ge n/2$. 
By the hypothesis,
\begin{align*}
	e_G ( A, \overline{A}) \le  e_{G - G'} (A, \overline{A}) + e_G ( A, V_0) \le  (d +2\eps) n |A| \le \alpha |A| |\overline{A}|
\end{align*}
contradicting the fact that $G$ has no $\alpha$-sparse cuts.
\end{proof}

Next, we show that if $R$ is connected, then every pair of vertices in $V(G)\setminus V_0$ can be connected by a short path, even if some small number of vertices are forbidden to be used on the path. 
We say the \emph{length of a path~$P$} to mean its number of its edges. 
For vertices $u, v$, a \emph{$(u,v)$-path} is a path having $u$ and $v$ as endpoints. 

\begin{lemma}\label{lem:connect}
Let $n,k,m \in \mathbb{Z}^+$ and $\ep, d>0$ be such that $k \ge 3$ and $4 \eps < 3 d$. 
Let $G$ be a graph on $n$ vertices with $\delta(G) \ge \eta n $ and $\mathcal{Q} = \{V_0, V_1, \dots, V_k\}$ an $(\eps, d,m,k)$-regular partition of~$G$.
Suppose that $X \subseteq V(G)$ with $|X| \le d m / 4$ and the $(\ep, d)$-reduced graph~$R$ of~$G$ is connected.
Let $u,v \in V(G) \setminus X$ such that $d(u,V_i) \ge (d-\eps)m$ and $d(v,V_{i'}) \ge (d-\eps)$ for some $i,i' \in [k]$. 
Then there exists an $(u,v)$-path in $G \setminus X$ of length at most $k+1$.
\end{lemma}

\begin{proof}
By relabeling if necessary, suppose $d(u, V_1)\geq (d-\ep)m$ and $d(v, V_t)\geq (d-\ep)m$ such that $V_1V_2\dots V_t$ is a path in~$R$ (with $t\leq k$), which exists since $R$ is connected. 
If $t=1$, then let $V_2$ be a neighbor of~$V_1$ in~$R$. 
Note that by Proposition~\ref{prop:typical}, there exists a vertex in $V_2$ which has a neighbor in both $(N(u)\cap V_1)\setminus X$ and $(N(v)\cap V_1)\setminus X$.
Thus there exists an $(u,v)$-path of length~$4$ in~$G \setminus X$.

If $t\geq 2$, then set $V_1'=(N(u)\cap V_1)\setminus X$ and $V_{t}'=(N(v)\cap V_t)\setminus X$.
Apply Proposition~\ref{prop:typical} iteratively, we find $V'_2, \dots, V'_{t-1}$ such that for all $i \in [t-2]$, $V'_{i+1} \subseteq V_{i+1} \setminus X$, $|V_{i+1}'| \geq (d-\ep)|V_{i+1}|-|X| \geq \ep |V_{i+1}|$ and $d(v, V_i')\geq ( d-\ep )|V_{i}'|>0$ for all $v\in V_{i+1}'$.  
Note that $e_G(V_{t-1}', V_{t}') >0$ by Proposition~\ref{prop:typical}.
In the end we have a $(u,v)$-path $u x_1\dots x_t v$ with each $x_i \in V'_i$.
\end{proof}

The following Lemma appears explicitly in~\cite[Lemma 10]{BS}, although we only state a weaker version here.  It allows us to turn the existence of a matching in the reduced graph into long paths in the original graph.

\begin{lemma}\label{lem:edgetopath}
Let $0< m \ll  \ep < d/100$.
Let $G$ be a bipartite graph with bipartition $\{V_1, V_2\}$ with $|V_1|, |V_2|\geq m$.
Suppose that $G$ is $\ep$-regular with density at least $d/4$.
Then there exists a path of length at least $(2-10\ep/d)m$.
%\al{used a weaker statement}
%Then for all $v'\in V_1$ and $v''\in V_2$ with $d(v'), d(v'')\geq d m/5$ there exists a $(v', v'')$-path of length at least $(1-5\ep/d)2m$.
\end{lemma}

Let $R^*$ be a fractional-random-reduced graph of $G$. 
The next lemma shows that if $R^*$ contains vertex-disjoint stars covering some proportion of $V(R^*)$, then we can find vertex-disjoint stars in $G$ covering approximately the same proportion of $V(G)$. 

\begin{lemma}\label{lemma:star}
Let $n, \ell,s \in \mathbb{Z}^+$ and $ \eps, \eps',d>0$ with $\eps \le d$ and $ 4 s \eps \le \eps ' $ .
Let $G$ be a graph on $n$ vertices and $\mathcal{Q} = \{V_0, V_1, \dots, V_k\}$ an $(\eps, d,m,k)$-regular partition of~$G$.
Let $R^*$ be an $(\ep', \ell,s)$-fractional-random-reduced graph of~$G$. 
Suppose that $S_1, \dots, S_{t}$ are vertex-disjoint stars in~$R^*$ and $L$ is the set of leaves of $S_1, \dots, S_{t}$.
Then $G$ contains $t$ vertex-disjoint stars $S'_1, \dots, S'_t$ such that 
\begin{enumerate}[label = {\rm (\roman*)} ]
	\item $|\bigcup_{p \in [t] }V(S_p')| \ge  \frac{m}{\ell} | L | -4 \ep' m k $;
	\item for each $i \in [k]$, $| V_i \cap \bigcup_{p \in [t] }V(S_p') | \le \frac{m}{\ell} | X_i \cap L | +t$.
\end{enumerate}
\end{lemma}

\begin{proof}
For each $p \in [t]$, let $y_p$ be the center of $S_p$ with $y_p \in X_{i_p}$ and $L_p := V(S_p) \setminus y_p$ be the leaves of~$S_p$. 
For distinct $i,j \in [k]$, let $d_{i,j} := d_G(V_i,V_j)$. 

\begin{claim} \label{clm:star1}
There exist distinct vertices $v_1$, $\dots$, $v_{t} \in V(G)$ and subsets $I_1$, $\dots$, $I_t \subseteq [k]$ such that, for each $p \in [t]$ and all $i \in [k]$,
\begin{enumerate}[label = {\rm (\alph*$_p$)} ]
	\item \label{itm:star1}$v_p \in V_{i_p}$;
	\item \label{itm:star2} $\sum_{i \in I_p}  |L_p \cap X_i|  \ge (1- 4 \eps) |L_p| $;
	\item \label{itm:star3}if $i \in I_p$, then $L_p \cap X_i \ne \emptyset$;
	\item \label{itm:star4} $\displaystyle \left| V_i \setminus  \bigcup_{ p' \in [p] \colon i \in I_{p'} } N_G(v_{p'})  \right| 
		\leq m \max \left\{  \prod_{p' \in [p] \colon i \in I_{p'} } \left( 1 -  d_{i_{p'},i} + \eps \right), \ep \right\}$
\end{enumerate}
\end{claim}

\begin{proofclaim}
Suppose for some $p \in [t]$, we have already found  $v_1, \dots,v_{p-1}$, $I_1, \dots, I_{p-1}$.
We find $v_p$ and $I_p$ as follows. 

Consider $i \in [k] \setminus \{i_p\}$ with  $X_i \cap L_p \ne \emptyset$.
Let $J_i := \{ p' \in [p-1] \colon i \in I_{p'} \}$ and $N_i := \bigcup_{ p' \in J_i } N_G(v_{p'})$.
We say that the vertex $v \in V_{i_p}$ is \emph{$i$-good} if 
\begin{align*}
		\left| V_i \setminus \left( N_{G}(v) \cup N_i \right) \right| 
		\leq \max \left\{  \left(1- d_{i_p,i} + \eps \right) |N_i|, \ep m \right\},
\end{align*}
otherwise, we say that $v$ is \emph{$i$-bad}.
Define the function $\sigma_i : V_{i_p} \rightarrow \{0,1\}$ such that $\sigma_i(v) =1 $ if $v$ is $i$-good, and $\sigma_i(v) = 0$ otherwise.
Since $X_i \cap L_p \ne \emptyset$, then the definition of $R^*$ implies that $d_{i_p,i} \ge d>0$.  
If $\left| V_i \setminus N_i \right| \le \eps m$, then $\sigma_i(v) = 1$ for all $v \in V_{i_p}$.
Otherwise since $G[V_{i_p}, V_i]$ is $\eps$-regular, Proposition~\ref{prop:typical} implies that for all but at most $ 2\eps m$ vertices $v \in V_{i_p}$,
	\begin{align*}
		\left| V_i \setminus \left( N_{G'}(v) \cup N_i \right) \right| 
		& \leq \left(1-  d_{i_p,i} + \eps \right) \left| V_i \setminus N_i \right|.
%		& \leq \left(1- (d_{i_p,i} - \eps) \right) \prod_{p' \in [p-1]} \left(1-\phi_{p'}(i)\left( d_{i_{p'},i} - \eps \right)\right) |V_i|.
	\end{align*}
Hence $\sigma_{i} (v) = 1$ for all but at most $ 2 \eps m$ vertices $v \in V_{i_p}$, that is, $\sum_{v\in V_{i_p}}\sigma_i(v)\geq (1-2\ep)m$.
Therefore,
\begin{align*}
	\sum_{v \in V_{i_p}}  \sum_{i \in [k] \colon X_i \cap L_p \ne \emptyset}  |X_i \cap L_p| \sigma_{i} (v) 
	\ge  (1- 2  \eps) m \sum_{i \in [k]} |X_i \cap L_p|
	= (1- 2 \eps)  m |L_p|.
\end{align*}
So, by averaging, there exists a vertex $v_{p} \in V_{i_p} \setminus \{v_1, \dots, v_{p-1}\}$ such that 
\begin{align*}
	 \sum_{i \in [k]}  |X_i \cap L_p| \sigma_{i} (v_p)  \ge (1- 4 \eps) |L_p| .
\end{align*}
Set $I_p := \{ i \in [k] \colon X_i \cap L_p \ne \emptyset$ and $\sigma_i (v_p) =1\}$.
Clearly \ref{itm:star1}--\ref{itm:star3} hold.
We now verify \ref{itm:star4}.
If $i \notin I_p$, then \ref{itm:star4} holds by~(d$_{p-1}$). 
If $i \in I_p$, then \ref{itm:star4} holds by (d$_{p-1}$) and the fact the $v_p$ is $i$-good.
\end{proofclaim}

We are going to construct vertex-disjoint stars $S'_1, \dots,S'_t$, where $S_p$ has center $v_p$ and leaves in $\bigcup_{i \in I_p} V_i$. 
Let $\hat{V} := \{v_1, \dots, v_t\}$.
Suppose that for each $i \in [k]$, 
\begin{align}
	\label{eqn:star1}
	|V_i \cap N_G(\hat{V}) | \ge \frac{m}{\ell} \left( \sum_{ p\in [t] \colon i \in I_p} |L_p \cap X_i| \right) - 2 \ep' m.
\end{align}
Pick $W_i \subseteq V_i \cap N_G(\hat{V})$ such that $	|W_i| = \frac{m}{\ell} \left( \sum_{ p\in [t] \colon i \in I_p} |L_p \cap X_i| \right) - 2 \ep' m$.
Together with~\ref{itm:star2} and the fact that $4s\eps \le \eps'$, we have
\begin{align*}
		\sum_{i\in [k]}|W_i|
		&\geq \sum_{i \in [k]}\left(  \frac{m}{\ell} \left( \sum_{ p\in [t] \colon i \in I_p} |L_p \cap X_i| \right) - 2 \ep' m \right)\\
		&= \frac{m}{\ell} \left(\sum_{p\in [t]} \sum_{i \in I_p}  |L_p \cap X_i|  \right) - 2  \ep' mk \\
		&\geq \frac{m}{\ell}\sum_{p\in [t]} (1- 4 \eps) |L_p| -2 \ep' mk
		\geq \frac{m}{\ell} |L|- (2\ep'+ 4 s\eps) m k
\geq \frac{m}{\ell} |L|- 3\ep' m k.
\end{align*}
As $W_i \subseteq V_i \cap \bigcup_{p \in [t]} N_{G}(\hat{V})$, $G$ contains vertex-disjoint stars $S'_1, \dots, S'_{t}$ with centers $v_1, \dots, v_t$ such that $\bigcup_{p \in [t]} V(S'_p) = \hat{V} \cup \bigcup_{i \in [k]} W_i$.
Clearly (i) holds. 
Since $|W_i| \le \frac{m}{\ell} |L \cap X_i|$ and $|\hat{V}| =t$, (ii) holds.

To see \eqref{eqn:star1} holds, consider $i \in [k]$. 
Let $J_i = \{p \in [t] \colon i \in I_p\}$. 
If $|V_i \cap N_G(\hat{V})| \ge (1- \eps)m$, then we are done as $\sum_{ p\in J_i} |L_p \cap X_i| \le |X_i| =  \ell$ and $\eps < \eps'$. 
Hence \ref{itm:star4} implies that 
\begin{align*}
	&\left|V_i \cap N_G(\hat{V}) \right|
	 \ge 
	\left| V_i \cap  \bigcup_{ p\in J_i } N_G( v_{p})  \right| 
	\ge
	\left( 1 -   \prod_{ p\in J_i } \left( 1 -  d_{i_{p},i} + \eps  \right) \right) m  
	\\ &
	\ge
	\left( 1 -  \prod_{p\in J_i} \left( 1 -  d_{i_{p},i}  \right) \right) m  - s \eps m 
	\overset{\mathclap{ \ref{itm:R4} }}{\ge} 
	\frac{m}{\ell} \left| X_i \cap \bigcup_{p \in J_i} N_{R^*}(y_p)  \right| - ( s \eps+  \eps') m\\
	& \ge \frac{m}{\ell} \left| X_i \cap \bigcup_{p \in J_i} N_{S_p}(y_p)  \right|  -  2 \eps'  m 
 =  \frac{m}{\ell}  \sum_{p\in J_i} |X_i \cap L_p|   - 2 \ep' m.
\end{align*}
Thus \eqref{eqn:star1} holds as required.
This completes the proof of the lemma. 
\end{proof}

%%%%%%%%%%%%%%%%%%%%%%%%%%%%%%%%%%%%%%%%%%%%%%%%%%%%%%%%%%%%%%%%%%%%%%%%%%%%%%%%

\section{Spanning star-cycles in graphs with no sparse cuts}\label{sec:cycle}

%\begin{lemma}[Chernoff]
%Let $X$ be a binomial or hypergeometric random variable.  Then for all $t\geq 0$,
%\begin{equation*}
%Pr(X\leq {\bbE}X-t)\leq \exp(-t^2/(2{\bbE}X)).
%\end{equation*}
%\end{lemma}

Let $\beta>0$ and let $G$ be a graph on $n$ vertices.  We say $G$ is \emph{$\beta$-near-bipartite} if there exists $X\subseteq V(G)$ such that $e(X)<\beta n^2$ and $e(V(G)\setminus X)<\beta n^2$.

The proof of Lemma~\ref{lma:component} will be obtained by a combination of the following two results, the first of which is proved by the first author and~Nelsen in~\cite{DN}.  Lemma~\ref{lemma:absorbing} provides the existence of an absorbing path which depends on whether $G$ is near-bipartite or not.  In order to use the absorbing path in the case that $G$ is near-bipartite, we show in Lemma~\ref{lma:spanning} that the nearly spanning $t$-star-cycle can be chosen so that there are an equal number of leftover vertices in each part of the bipartition.

\begin{lemma}[Absorbing Lemma~\cite{DN}]\label{lemma:absorbing}
Let $0< 1/n\ll \alpha\ll \eta$, set $\rho:=\alpha^{32/\alpha^2}$, and suppose $G$ is an $(\eta, \alpha)$-robust graph on $n$ vertices.  
\begin{enumerate}
\item If $G$ is not $\alpha^4$-near-bipartite, then there exists a path $P$ with $|V(P)| \le \rho n $ such that for all $W\subseteq V(G)\setminus V(P)$ with $|W|\leq \rho^3 n$, the subgraph $G[V(P)\cup W]$ contains a spanning path having the same endpoints as $P$.  

\item If $G$ is $\alpha^4$-near-bipartite, then there exists a partition $\{A,B\}$ of $V(G)$ and a path~$P$ with $|V(P)| \le \rho n $ such that $\delta(G[A,B]) \ge \eta n /2$ and for all $W\subseteq V(G)\setminus V(P)$ with $|W\cap A|=|W\cap B|\leq \rho^3 n$, the subgraph $G[V(P)\cup W]$ contains a spanning path having the same endpoints as $P$.
\end{enumerate}
\end{lemma}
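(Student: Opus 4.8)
The plan is to prove Lemma~\ref{lemma:absorbing} by the \emph{absorbing method}, handling parts (1) and (2) in parallel and differing only in the kind of absorbing gadget used. An \emph{absorber} for a vertex is a set of a constant number $q$ of vertices spanned by a short path $Q$ with two distinguished endpoints such that $Q$ has a Hamilton path between those endpoints and, after adjoining the vertex to be absorbed, the enlarged graph still has a Hamilton path between the \emph{same} two endpoints. In part (1) an absorber for $v$ is built from two neighbours $x,y$ of $v$ joined by a short path meeting $v$ only at $x,y$ and rigid enough that $v$ can be spliced in (lengthening the path by one vertex, which is harmless in a non-bipartite setting). In part (2) one first uses the near-bipartiteness, together with the robustness, to pass to a spanning bipartite subgraph $H=G[A,B]$ that is still $(\eta/2,\alpha/2)$-robust (the ``moreover'' clause; here one moves the few edges inside the parts and applies Observation~\ref{slicerobust}-style slicing), and then works with absorbers inside $H$ that swallow one vertex of $A$ and one of $B$ at a time, so that $|A|$ and $|B|$ stay balanced along the path; this parity constraint is exactly why part (2) only handles sets $W$ with $|W\cap A|=|W\cap B|$.

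The first step is to prove \textbf{abundance of absorbers}: for every relevant vertex $v$ and every set $Z$ with $|Z|\le\rho^2 n$, the number of absorbers for $v$ that avoid $Z$ is at least $cn^{q}$ for some $c=c(\eta,\alpha)>0$. Robustness is what drives this: the no-sparse-cut condition forces neighbourhood balls to grow by a constant factor at each step, so $G$ --- and $G-Z$ for any small $Z$, by Observation~\ref{slicerobust} --- has diameter bounded in terms of $\eta$ and $\alpha$ only, and in fact admits many short paths between any prescribed pair of vertices avoiding any prescribed small vertex set; combined with $\delta(G)\ge\eta n$ this yields the required count of gadgets once the two neighbours $x,y$ of $v$ have been chosen. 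The subtle point --- and the \textbf{main obstacle} --- is that such gadgets exist in abundance only when $v$ is not ``locally bipartite'', and a robust graph in which this fails for some vertex can be shown to be $\alpha^4$-near-bipartite; it is precisely this dichotomy that forces the two cases of the lemma, and carrying it out (extracting the global near-bipartite structure from a local failure) is the technical heart of the argument.

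Next I would \textbf{build the absorbing path}. Put each absorber into a random family $\mathcal F$ independently with probability $p=\Theta(\rho\,n^{1-q})$; a first-moment calculation shows that with positive probability $|\mathcal F|=O(\rho n)$, at most $O(\rho^2 n)$ pairs of members of $\mathcal F$ intersect, and every relevant vertex lies in $\Omega(\rho n)$ members of $\mathcal F$. Deleting one absorber from each intersecting pair (and a few over-represented ones) leaves pairwise vertex-disjoint absorbers covering every vertex at least $2\rho^3 n$ times. Finally, using the bounded-diameter property again --- now to connect consecutive absorbers, one at a time, by short paths through vertices not yet used --- one strings the absorbers of $\mathcal F$ into a single path $P$; since $|\mathcal F|=O(\rho n)$ and each gadget and each connector has constant length, $P$ has length $O(\rho n)\le\rho n$ provided $\rho$ is small enough, which is the role of the choice $\rho=\alpha^{32/\alpha^2}$.

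It remains to check the \textbf{absorption property}. Given $W$ disjoint from $V(P)$ with $|W|\le\rho^3 n$ (and, in part (2), balanced), process the vertices of $W$ one by one (in part (2), as $A$/$B$ pairs): for the current vertex $w$, at least one of its $\ge 2\rho^3 n$ absorbers lying inside $P$ is still unused, and rerouting that sub-path swallows $w$ while keeping every other vertex of $P$ and both endpoints of $P$ fixed. After all of $W$ has been processed, $G[V(P)\cup W]$ has a spanning path with the endpoints of $P$, as required. Everything here except the abundance step (and its interplay with the near-bipartite dichotomy) is routine.
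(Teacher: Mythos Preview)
The paper does not prove Lemma~\ref{lemma:absorbing}: it is imported wholesale from~\cite{DN}, as the attribution in the lemma heading and the sentence immediately preceding it (``the first of which is proved by the first author and Nelsen in~\cite{DN}'') make explicit. There is therefore no in-paper proof to compare your proposal against.

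That said, your outline is the standard absorbing-method template and is broadly the shape such arguments take: establish that every vertex (or balanced $A$/$B$ pair, in the bipartite case) has $\Omega(n^{q})$ constant-size absorbers, select a random family and prune to obtain pairwise-disjoint absorbers still covering every vertex $\Omega(\rho^{3} n)$ times, then string them into a single short path using the bounded-diameter/connecting property of robust graphs. Two small inaccuracies worth flagging: passing to the robust bipartite subgraph in part~(2) is not an instance of Observation~\ref{slicerobust} (which deletes \emph{vertices}); rather, $\alpha^{4}$-near-bipartiteness says almost all edges already cross some bipartition $\{A,B\}$, so $G[A,B]$ directly inherits the minimum-degree and no-sparse-cut conditions with halved constants. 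And the step you yourself identify as the ``main obstacle''---promoting a local failure of absorber abundance to a global $\alpha^{4}$-near-bipartite structure---is genuinely the technical heart, and your sketch only names it rather than indicating how it is carried out; that is where most of the work in~\cite{DN} lies.
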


\begin{lemma}\label{lma:spanning}
Let $s,n \in \bbZ$ and be $\alpha, \alpha', \eta, \rho, \gamma$ be such that $	1/n \ll \rho , \alpha,\alpha' \ll \eta , \gamma  , 1/s$ and $\rho \le \alpha \eta /16s$.
Let $G$ be an $(\eta,\alpha)$-robust graph on $n$ vertices such that $d(v)\geq (\frac{1}{(\sqrt{s}+1)^2)}+\gamma)n$ for all but at most $\alpha'n$ vertices. 
Let $P\subseteq G$ be a path of order $|V(P)| \le \rho n$.
Then $G$ contains a $t$-star cycle~$C^*$ for some $t\leq s$ with $|V(C^*)| \ge (1- \rho^3)n$, which contains $P$ as a segment.  
Moreover, if there exists a partition $\{A,B\}$ of~$V(G)$ such that $\delta(G[A,B]) \ge \eta n/2$, then we can choose $C^*$ as above having the additional property that $|A\setminus V(C^*)|=|B\setminus V(C^*)|$.
\end{lemma}

\begin	{proof}
Define $N, \ell  \in \bbZ$ and let $d, \eps, \eps' $ be such that 
\begin{align*}
	1/n \ll 1/N \ll \eps \ll d, 1/\ell, \eps'\ll \rho, \alpha,\alpha' \ll \eta , \gamma  , 1/s.
\end{align*}
If $\{A,B\}$ is the partition of~$V(G)$ such that $\delta(G[A,B]) \ge \eta n/2$, then by Chernoff's bound for a random variable with hypergeometric distribution, there exists a set $W \subseteq V(G) \setminus V(P)$ such that $|W \cap A| = |W \cap B| = \rho^3 n/4$ and for all $v\in V(G)$, 
\begin{align}
	\label{eqn:res1}
	d_G ( v, A \cap W ) \ge \frac{\rho^3}{4} d_{G \setminus V(P)} (v ,A) - \eps n~\text{ and } d_G ( v, B \cap W ) \ge \frac{\rho^3}{4} d_{G \setminus V(P)} (v ,B) - \eps n.
\end{align}
We reserve $W$ to ensure $|A\setminus V(C^*)|=|B\setminus V(C^*)|$ later.
If such $\{A,B\}$ does not exist, then let $ W $ be a subset of $V(G) \setminus V(P)$ of size $|W| = \rho^3 n /2$.

Let $\hat{G}:=G \setminus (V(P) \cup W)$. 
Let  
\begin{align*}
V':= \left\{ v \in V(\hat{G}) \colon d_{\hat{G}}(v) < \left( \frac{1}{(\sqrt{s}+1)^2}+\frac{3\gamma}{4}\right)n \right\},
\end{align*}
so $ |V'| \le \alpha' n $.
Let $\mathcal{Q}' := \{ V', V(\hat{G}) \setminus V'\}$.
Apply Lemma~\ref{lma:degreeform} to $\hat{G}$ and $\mathcal{Q}'$ to obtain a spanning subgraph~$G'$ of~$\hat{G}$ and an $(\eps, d,m,k)$-regular partition $\mathcal{Q}= \{V_0, V_1,  \dots, V_k\}$ of $G'$ such that 
\begin{enumerate}[label = {\rm(a$_{\arabic*}$)}]
	\item \label{itm:a1}$\eps^{-1}\leq k \leq N$;
	\item $\{V_1, \dots, V_k\}$ is a refinement of $\mathcal{Q}' \setminus V_0$;
	\item \label{itm:a3}$\Delta ( \hat{G} - G') \le  (d + \eps)n$.
\end{enumerate}
Let $R$ be the $(\ep, d)$-reduced graph of~$G'$.
Note that since $\hat{G}$ is $(\eta/2, \alpha/2)$-robust by Observation~\ref{slicerobust} and the fact that $|V(P) \cup W| \le 2\rho n \le \alpha \eta n/8$, $R$ is connected by Lemma~\ref{lma:connectedreduced}.

We now reserve a set $U$, which will be used to connect paths into a cycle later, as follows. 
By Chernoff's bound for a random variable with hypergeometric distribution, there exists a set $U \subseteq \bigcup_{i \in [k]} V_i$ such that $|U \cap V_i| = \rho^8 m$ and, for all $v\in V(G)$ we have  
\begin{align}
	\label{eqn:res2}
	d_G ( v, U ) \ge \rho^8( d_G(v) - (\rho + \rho^3/2)n) -\ep n \ge  \rho^8 \eta n / 2\ge  \eta |U|/2,
\end{align}
where we use the facts that $\delta(G)\geq \eta n$, $|U| \le \rho^8 n$ and $\eps, \rho \ll \eta$.
Let $\mathcal{Q}_U = \{ \emptyset, V_1 \cap U, V_2 \cap U, \dots, V_k \cap U \}$.
Since $\eps \ll \rho, d$, Proposition~\ref{prop:slice} implies that $\mathcal{Q}_U$ is an $( \eps^{1/2} , d/2 , \rho^8 m,k)$-regular partition of~$G'[U]$.
Note that $R$ is also isomorphic to the $(\ep^{1/2}, d/2)$-reduced graph of~$G'[U]$.
So the $(\ep^{1/2}, d/2)$-reduced graph of~$G'[U]$ is connected.

Let $G^* = G' \setminus U$, $\mathcal{Q}^* = \mathcal{Q} \setminus U$ and $m^* = (1- \rho^8)m$.
Let $V^*_i = V_i \setminus U$ for all $i \in [k]$. 
Note that 
\begin{align}
	\label{eqn:m*k}
	m^*k  = (1- \rho^8)mk \ge (1- \rho^8) (1- \eps) |V(\hat{G})| \ge (1- \rho^8 - \eps - \rho^3/2) n - |V(P)|.
\end{align}
By Proposition~\ref{prop:slice}, $\mathcal{Q}^*$ is an $( 2 \eps , d/2 ,  m^*,k)$-regular partition of~$G^*$.
Let $R^*$ be an $(\ep', \ell,s)$-fractional-random-reduced graph of~$G^*$.
Note that  for all $v \in V(G^*)$, 
\begin{align*}
	d_{G^*} (v) 
	&\ge d_G(v) - |V(P)| -|W| - \Delta( \hat{G} - G') - |U|\\
	& \overset{\mathclap{\text{\ref{itm:a3}}}}{\ge} d_G(v) - (\rho + \rho^3/2 + d + \eps + \rho^8) n
	\ge d_G(v) - \eta |G^*|/3. 
\end{align*}
Proposition~\ref{prop:mindeg} implies that $\delta (R^*) \ge \eta k \ell / 2 $ and $d_{R^*}(x)  \ge \left( \frac{1}{(\sqrt{s}+1)^2} + \frac{\gamma}{2}\right) k \ell$ for all but at most $8 \alpha' k \ell$ vertices $x \in V(R^*)$.

By Lemma~\ref{lma:star2match}, $R^*$ has a spanning $t$-star-$2$-matching~$M^*$ with some $t \le s$. 
Let $S_1,\dots, S_{t}$ be the non-trivial stars of $M^*$ and let $M$ be the $2$-matching of~$M^*$.
Let $L$ be the set of leaves of $S_1, \dots, S_{t}$.
Since $t \le \ell$, \ref{itm:a1} implies that 
\begin{align}
\label{eqn:LM}
	|L| + |V(M)| = k \ell - t \ge (1- \eps) k\ell ~ \text{and}~| X_i \cap (L \cup V(M))| \le \ell.
\end{align}
By Lemma~\ref{lemma:star}, $G^*$ contains $t$ vertex-disjoint stars $S^*_1, \dots, S^*_t$ such that 
\begin{enumerate}[label = {\rm(b$_{\arabic*}$)}]
	\item \label{itm:b1}
	$|\bigcup_{p \in [t] }V(S^*_p) | \ge  \frac{m^*}{\ell} | L | -4 \ep' m^* k $;
	\item \label{itm:b2}
	for each $i \in [k]$, $| V^*_i \cap \bigcup_{p \in [t] }V(S^*_p) | \le \frac{m^*}{\ell} | X_i \cap L | +t$.
\end{enumerate}

Recall that $M$ is a $2$-matching.
Let $M=M_1\cup M_2$, where $M_1$ is the set of components of $M$ consisting of a single edge and $M_2$ is the set of components of $M$ which are odd cycles.
Let $H$ be the multigraph on $[k]$ such that $ij$ is an edge of $H$ of multiplicity $2\mu_1+\mu_2$, where $\mu_r$ is the number of edges between $X_i$ and $X_j$ in $M_r$.
%Since $M$ is a $2$-matching, there exists a multigraph~$H$ on $[k]$ such that each vertex $i \in [k] = V(H)$ is contained in exactly $2|X_i \cap V(M)|$ edges and if $ii' \in E(H)$ then $x_{i,j} x_{i',j'} \in M$ for some $j,j' \in [\ell]$. 
Note that 
\begin{equation}\label{eqn:HM}
|E(H)| = |V(M)|.
\end{equation}
For each edge $e \in E(H)$, we choose $W_e \subseteq \bigcup_{i \in e} V_i \setminus \bigcup_{p \in [t] }V(S^*_p)$ such that $|W_e \cap V^*_i| = m^*/2\ell - t $ for each $i \in e$.
By~\eqref{eqn:LM} and~\ref{itm:b2}, we can ensure that $\{W_e \colon e \in E(H)\}$ is pairwise disjoint. 

Consider any $e = ii'\in E(H)$. 
Note that $G^*[W_e] = G^*[W_e \cap V^*_i, W_e \cap V^*_{i'}]$.
By Proposition~\ref{prop:slice}, $G^*[W_e]$ is $5 \ell \eps$-regular with density at least $d/4$.
Apply Lemma~\ref{lem:edgetopath} and obtain a path~$P_e$ in~$G^*[W_e]$ with 
\begin{align*}
|V(P_e)| \ge (1-100 \ep / d) m^*/\ell - 2t \ge ( 1 - 2\eps' ) m^*/\ell,
\end{align*}
where the last inequality holds as $\eps \ll d \ll \eps'$.
Recall that $|E(H)| = |V(M)| \le k \ell$. 
Note that
\begin{align}
\nonumber  & |V(P)| +|\bigcup_{p \in [t]}V(S^*_p)| + \sum_{e \in E(H) } |V(P_e)|\\
 \nonumber  & \overset{\mathclap{\text{\ref{itm:b1}}}}{\ge } |V(P)| + \frac{m^*}{\ell} | L | -4 \ep' m^* k  + ( 1 - 2\eps' ) \frac{m^*}{\ell}|E(H)|\\
			\nonumber  & \overset{\mathclap{\eqref{eqn:HM}}}{ \ge } |V(P)| + (1-2 \eps')(|L|+|V(M)|)\frac{m^*}{\ell} -4 \ep' m^* k \\
			\nonumber & \overset{\mathclap{\eqref{eqn:LM}}}{ \ge } |V(P)| + (1- 6\eps'- \eps)km^*\\
			& \overset{\mathclap{\eqref{eqn:m*k}}}{ \ge } (1- 6 \eps'- 2\eps - \rho^8- \rho^3/2)n
			> (1 - \rho^3/2-  2\rho^8)n, \label{eqn:V(C*)}
\end{align}
where for the last inequality we use the fact that $\eps, \eps' \ll \rho$.

Next, we connect $P$, $S_1^*$, $\dots$, $S^*_{t}$, $\bigcup_{e\in E(H)}P_e$ into a $t$-star-cycle using vertices from $U$ as follows. 
Let $P_1, \dots, P_{q}$ be an enumeration of $\{P_e : e \in E(H)\}$, so $q \le k \ell$. 
Let $P_{q+1} := P$.
For $j \in [q+1]$, let $x_{2j-1}, x_{2j}$ be the end vertices of~$P_j$.
For $p \in [t]$, set $x_{2q + 2p +1}, x_{2q+2p+2}$ be the center of $S'_p$ and a leaf of $S'_p$ respectively. 
By~\eqref{eqn:res2} and since $k\ell+t\leq \gamma n$, there exists distinct vertices $y_1, \dots, y_{2q+2t+2} \in U$ such that $y_j \in N_G(x_j)$ for all $j \in [2q+2t+2]$.
Moreover, \eqref{eqn:res2} implies that for each $j \in [2q+2t+2]$, $d_G ( y_j, U \cap V_i) \ge \eta |U \cap V_i|/2$ for some $i \in [k]$.
Recall that the $(\ep^{1/2}, d/2)$-reduced graph of~$G'[U]$ is connected.
By repeat applications of Lemma~\ref{lem:connect}, there exists disjoint paths $P'_1, \dots, P'_{q+t+1}$~in $G'[U]$ such that each $P'_j$ is a $(y_{2j},y_{2j+1})$-path of length at most $k+1$ (with $y_1 = y_{2q+2t+3}$).
Set
\begin{align*}
C^* = P \cup \bigcup_{p \in [t]} S^*_p \cup \bigcup_{e \in E(H)} P_e \cup \bigcup_{j \in [q+t+1]} P'_j \cup \bigcup_{j \in [2q+2t+2]}\{ x_j y_j\}. 
\end{align*}
Note that $C^*$ is a $t$-star-cycle and by~\eqref{eqn:V(C*)},
\begin{equation}\label{eqn:C*}
|V(C^*)| \ge (1 - \rho^3/2- 2\rho^8)n.
\end{equation}

From this point on, all that remains is to prove the last sentence of the Lemma.  So we assume that there exists a partition $\{A,B\}$ of~$V(G)$ such that $\delta(G[A,B]) \ge \eta n/2$.
We will show that $|A \setminus V(C^*)| = |B \setminus V(C^*)|$ by altering~$C^*$.
(Note that $W \cap V(C^*) = \emptyset$, so we can add vertices of $W$ to $C^*$.)
Let $K^*$ be the set of vertices in $C^*$ that have degree at least~$3$. 
Note that $K^*$ is precisely the set of centers of $S^*_1, \dots, S^*_t$. 
Let $A_0:=A\setminus V(C^*)$ and let $B_0:=B\setminus V(C^*)$.
Suppose without loss of generality that $|A_0|-|B_0| >0$.
Since $W \subseteq A_0 \cup B_0$, $|W\cap A| = |W \cap B|= \rho^3 n/4$, we have by \eqref{eqn:C*}
\begin{align*}
	0 < |A_0 | - |B_0| <  2\rho^8 n.
\end{align*}

First suppose that there exists $x \in K^*$ with $d_{C^*}(x) \ge \rho^4 n+2$.
Let $L^*$ be the set of vertices $y \in N_{C^*}(x)$ that have degree~$1$ in~$C^*$. 
Note that $|L^*| \ge \rho^4 n$.
If $|L^* \cap B| \ge \rho^4 n/2$, then we are done by deleting $|A_0 | - |B_0|$ vertices of $L^* \cap B$ from $C^*$. 
If $|L^* \cap A| \ge \rho^4 n/2$, then \eqref{eqn:res1} implies that 
\begin{align*}
	d_G ( x , A \cap W ) \ge \frac{\rho^3}4 |L^* \cap A| - \eps n \ge 2\rho^8 n > |A_0 | - |B_0|.
\end{align*}
In this case, we are done by joining $|A_0 | - |B_0|$ vertices in $N_G(x) \cap A \cap W$ to $x$. 

Therefore we may assume that $d_{C^*}(x) <  \rho^4 n+2\leq 2\rho^4 n$ for all $x \in K$. 
This implies that $C^*$ has at most $2s \rho^4 n$ vertices of degree~$1$. 
Let $C$ be the cycle in $C^*$, that is, $C$ is obtained from $C^*$ by deleting all vertices of degree~$1$. 
Hence by \eqref{eqn:C*},
\begin{align*}
|V(C)| \ge |V(C^*)| - 2s \rho^4 n \ge (1- \rho^3)n.
\end{align*}
Since $\delta(G[A,B]) \ge \eta n /2$, we deduce that $V(C) \cap A, V(C) \cap B  \ne \emptyset$.
Let $A' := A \setminus V(C)$ and $B' := B \setminus V(C)$ and suppose that $|A'|-|B'| >0$ (and the case $|B'|-|A'|>0$ is proved analogously).
Note that
\begin{align*}
	0 < |A'| - |B'| \le |A_0 | - |B_0| + |V(C^*) \setminus V(C)| < 	(2\rho^8+ 2s \rho^4) n \le 2(s+1) \rho^4 n.
\end{align*}
Let $x' \in V(C^*) \cap B$, which exists by \eqref{eqn:C*}. 
Recall that $\delta(G[A,B]) \ge \eta n /2$ and $\rho s \ll \eta$.
Hence \eqref{eqn:res1} implies that 
\begin{align*}
	d_G ( x' , A \cap W ) \ge \rho^3 \eta n/8 - \eps n \ge  2(s+1) \rho^4 n > |A' | - |B'|.
\end{align*}
Let $C'$ be the $1$-star-cycle obtained from~$C$ by joining $|A' | - |B'|$ vertices in $N_G(x',A \cap W)$ to~$x'$. 
Note that $|V(C')| \ge |V(C)| \ge (1- \rho^3) n$ and $|A \setminus V(C')| = |B \setminus V(C')|$, as desired.
\end{proof}

\begin{proof}[Proof of Lemma~$\ref{lma:component}$]
We will only consider the case when $G$ is $\alpha^4$-near-bipartite (as the other case can be proven by a similar argument). 
Set $\rho:=\alpha^{32/\alpha^2}$.
%If $G$ is not $\alpha^4$-near-bipartite, then apply Lemma~\ref{lemma:absorbing} to get a path $P$ with $|V(P)|\leq \rho n$ such that for all $W\subseteq V(G)\setminus V(P)$ with $|W|\leq \rho^3 n$, the subgraph $G[V(P)\cup W]$ contains a spanning path having the same endpoints as $P$.
%Now apply Lemma~\ref{lma:spanning} to $G$ to get a $t$-star cycle $C^*$ for some $t \le s$which contains $P$ as a segment and has $|V(C^*)|\geq (1-\rho^3)n$. 
%By the property of~$P$, we can replace~$P$ in $C$ with a path $P^*$ having the same endpoints as~$P$ and $V(P') = V(P)\cup ( V(G) \setminus V(C^*) ) $.
%Thus we complete the proof in this case.  
By Lemma~\ref{lemma:absorbing} there exists a partition $\{A,B\}$ of $V(G)$ and a path~$P$ with $|V(P)| \le \rho n $ such that $\delta(G[A,B]) \ge \eta n /2$ and for all $W\subseteq V(G)\setminus V(P)$ with $|W\cap A|=|W\cap B|\leq \rho^3 n$, the subgraph $G[V(P)\cup W]$ contains a spanning path having the same endpoints as $P$.
Now apply Lemma~\ref{lma:spanning} to $G$ to get a $t$-star cycle $C^*$ for some $t \le s$ which contains $P$ as a segment and has $|A\setminus V(C^*)|=|B\setminus V(C^*)|\leq \rho^3 n$.
By the property of $P$ and the size of the sets $A\setminus V(C^*)$ and $B\setminus V(C^*)$, we can replace~$P$ in $C$ with a path $P^*$ having the same endpoints as~$P$ and $V(P') = V(P)\cup ( V(G) \setminus V(C^*) ) $.
\end{proof}

\section{Acknowledgments}

We thank the referees for their careful reading of the paper and their suggestions for improving the exposition.

The first author would like to thank Mike Ferrara for introducing him to the problem.

\end{document}